\newcommand{\Gal}{\operatorname{Gal}}
\newcommand{\Qp}{\mathbf{Q}_p}
\newcommand{\Zp}{\mathbf{Z}_p}
\newcommand{\Fp}{\mathbf{F}_p}
\newcommand{\ZZ}{\mathbf{Z}}
\newcommand{\RR}{\mathbf{R}}
\newcommand{\eps}{\varepsilon}
\newcommand{\HH}{\mathcal{H}}
\newcommand{\WW}{\mathcal{W}}
\newcommand{\Aut}{\operatorname{Aut}}
\newcommand{\Mat}{\operatorname{Mat}}
\newcommand{\Nm}{\operatorname{N}}
\newcommand{\sh}{\operatorname{sh}}
\newcommand{\orb}{\operatorname{orb}}
\newcommand{\cont}{\operatorname{cont}}
\newcommand{\cyc}{\operatorname{cyc}}
\newcommand{\fin}{\operatorname{fin}}
\newcommand{\val}{\operatorname{val}}
\newcommand{\hyphen}{\!\operatorname{-}\!}
\newcommand{\vp}{\val_p}
\newcommand{\vm}{\val_M}
\newcommand{\vx}{\val_X}
\newcommand{\bigO}{\operatorname{O}}
\newcommand{\dcroc}[1]{[\![ #1 ]\!]}
\newcommand{\dpar}[1]{(\!( #1 )\!)}
\newcommand{\efont}{\mathbf{E}}
\newcommand{\kf}{E}
\newcommand{\dfont}{\mathbf{D}}
\newcommand{\et}{\widetilde{\mathbf{E}}}
\newcommand{\M}{\operatorname{M}}
\newcommand{\GL}{\operatorname{GL}}
\newcommand{\Id}{\operatorname{Id}}
\newcommand{\BB}{\mathrm{B}_2(\Qp)}
\newcommand{\tco}{{}^t\!\operatorname{co}}
\newcommand{\Gm}{\mathbf{G}_\mathrm{m}}
\renewcommand{\geq}{\geqslant}
\renewcommand{\leq}{\leqslant} 
\renewcommand{\phi}{\varphi} 
\author{Laurent Berger}
\address{UMPA de l'\'ENS de Lyon \\
UMR 5669 du CNRS}
\email{laurent.berger@ens-lyon.fr}
\urladdr{perso.ens-lyon.fr/laurent.berger/}
\author{Sandra Rozensztajn}
\address{UMPA de l'\'ENS de Lyon \\
UMR 5669 du CNRS}
\email{sandra.rozensztajn@ens-lyon.fr}
\urladdr{perso.ens-lyon.fr/sandra.rozensztajn/}
\title{Decompletion of cyclotomic perfectoid fields in positive characteristic}
\date{\today}
\begin{document}

\begin{abstract}
Let $\kf$ be a field of characteristic $p$. The group $\Zp^\times$ acts on $\kf \dpar{X}$ by $a \cdot f(X) = f((1+X)^a-1)$. This action extends to the $X$-adic completion $\et$ of $\cup_{n \geq 0} \kf \dpar{X^{1/p^n}}$. We show how to recover $\kf \dpar{X}$ from the valued $\kf$-vector space $\et$ endowed with its action of $\Zp^\times$. To do this, we introduce the notion of super-H\"older vector in certain $\kf$-linear representations of $\Zp$. This is a characteristic $p$ analogue of the notion of locally analytic vector in $p$-adic Banach representations of $p$-adic Lie groups.
\end{abstract}

\subjclass{11S; 12J; 13J; 22E}


\maketitle

\setcounter{tocdepth}{2}
\tableofcontents

\setlength{\baselineskip}{18pt}
\section*{Introduction}

Let $p$ be a prime number, and let $\kf$ be a field of characteristic
$p$. Let $\efont = \kf \dpar{X}$, and let $\et$ be the $X$-adic
completion of $\cup_{n \geq 0} \kf \dpar{X^{1/p^n}}$. Note that if $\kf$
is perfect, the field $\et$ is perfectoid. The group $\Zp^\times$ acts on
$\efont$ by $\left(a \cdot f\right)(X) = f((1+X)^a-1)$. This action extends to
$\cup_{n \geq 0} \kf \dpar{X^{1/p^n}}$ by $\left(a \cdot f\right)(X^{1/p^n}) =
f((1+X^{1/p^n})^a-1)$, and by continuity to $\et$. The question that
motivated this paper is the following.

\begin{enonce*}{Question} 
Can we recover $\cup_{n \geq 0} \kf \dpar{X^{1/p^n}}$ or even $\kf \dpar{X}$ from the data of the valued $E$-vector space $\et$ endowed with the action of $\Zp^\times$?
\end{enonce*}

In characteristic zero, it is possible to answer an analogous question by using Schneider and Teitelbaum's theory of locally analytic vectors in $p$-adic Banach representations of $p$-adic Lie groups. For characteristic $p$ representations, there is no such theory. One of the main contributions of this article is to introduce a characteristic $p$ analogue of locally analytic functions and vectors. 

Let $M$ be an $\kf$-vector space, endowed with a valuation $\vm$ such that $\vm(xm) = \vm(m)$ if $x \in \kf^\times$. We assume that $M$ is separated and complete for the $\vm$-adic topology. For example, we will consider $M= \efont$ or $\et$ with the $X$-adic valuation. We say that a function $f :\Zp \to M$ is super-H\"older if there exist constants $\lambda,\mu \in \RR$ such that $\vm(f(x)-f(y)) \geq p^\lambda \cdot p^i +\mu$ whenever $\vp(x-y) \geq i$, for all $x,y \in \Zp$ and $i \geq 0$. These super-H\"older functions are the characteristic $p$ analogue of locally analytic functions $\Zp \to \Qp$. We prove an analogue of Mahler's theorem for continuous functions $f : \Zp \to M$, and give a characterization of super-H\"older functions in terms of their Mahler expansions. This is a characteristic $p$ analogue of a theorem of Amice.

Assume now that $\Gamma$ is a group that is isomorphic to $\Zp$ via a coordinate map $c$, and that $M$ is endowed with an $\kf$-linear action of $\Gamma$ by isometries. We say that $m \in M$ is a super-H\"older vector if the orbit map $z \mapsto c^{-1}(z) \cdot m$ is a super-H\"older function $\Zp \to M$. This definition is a characteristic $p$ analogue of the notion of locally analytic vector of a $p$-adic Banach representation of a $p$-adic Lie group. We let $M^{\Gamma\hyphen\sh,\lambda}$ denote the space of super-H\"older vectors for a given constant $\lambda$ as in the definition above. We also let $M^{\sh}$ denote the set of super-H\"older vectors in $M$. Our main result is a complete answer to the question above. Consider $M=\et$, endowed with the action of $\Gamma=1+p^k \Zp$ for $k \geq 1$ (or $k \geq 2$ if $p=2$).

\begin{enonce*}{Theorem} 
For all $n \geq 0$, we have $\et^{(1+p^k \Zp) \hyphen\sh, k-n} = \kf \dpar{X^{1/p^n}}$. 

In particular, $\et^{\sh} = \cup_{n \geq 0} \kf \dpar{X^{1/p^n}}$.
\end{enonce*}

The main ingredients of the proof of this theorem are some simple computations in $\kf \dcroc{X}$, as well as Colmez' analogue of Tate traces for $\et$.

We give several applications of our main result. First, we compute the perfectoid commutant of $\Aut(\Gm)$, namely the set of $u \in \et^{\vx >0}$ such that $u \circ \gamma_a = \gamma_a \circ u$ for all $a \in \Zp^\times$, where $\gamma_a(X) = (1+X)^a-1$. Using our main theorem, and a result of Lubin-Sarkis on the classical commutant of $\Aut(\Gm)$, we prove  that such a $u$ is of the form $\gamma_b(X^{p^n})$ for some $b \in \Zp^\times$ and $n \in \ZZ$. Next we study $(\phi,\Gamma)$-modules over $\efont$. We prove that the action of $\Gamma$ on a $(\phi,\Gamma)$-module $\dfont$ is always super-H\"older, and deduce that $(\et \otimes_{\efont} \dfont)^{\sh} = (\cup_{n \geq 0} \kf \dpar{X^{1/p^n}}) \otimes_{\efont} \dfont$. This allows us to extend our computation of super-H\"older vectors to the finite extensions of $\Fp \dpar{X}$ provided by Fontaine and Wintenberger's theory of the field of norms. We finish this article with a computation that suggests that the theory of super-H\"older vectors could have some applications to the $p$-adic local Langlands correspondence.

\vspace{\baselineskip}\noindent\textbf{Acknowledgements.}
We thank Juan Esteban Rodr\'{\i}guez Camargo for asking LB the question that motivated this paper, as well as Christophe Breuil, Daniel Gulotta, Gal Porat and the referee for their comments and questions.

\section{Super-H\"older functions and vectors}
\label{secshf}

In this section, we define super-H\"older functions $\Zp \to M$ and super-H\"older vectors in $M$ when $M$ is a representation of a group isomorphic to $\Zp$. We prove an analogue of Mahler's theorem for continuous functions $\Zp \to M$, and give a characterization of super-H\"older functions in terms of their Mahler expansions.

\subsection{Super-H\"older functions}
\label{subshf}

We keep the notation of the introduction. Let $M$ be an $\kf$-vector space, endowed with a valuation $\vm$ such that $\vm(xm) = \vm(m)$ if $x \in \kf^\times$. We assume that $M$ is separated and complete for the $\vm$-adic topology. For example, we will consider $M= \kf \dcroc{X}$ with the $X$-adic valuation.

Let $C^0(\Zp,M)$ denote the space of continuous functions $f : \Zp \to M$. 

\begin{defi}
\label{defsh}
We say that $f : \Zp \to M$ is super-H\"older if there exist constants $\lambda,\mu \in \RR$ such that $\vm(f(x)-f(y)) \geq p^\lambda \cdot p^i +\mu$ whenever $\vp(x-y) \geq i$, for all $x,y \in \Zp$ and $i \geq 0$.
\end{defi}

We let $\HH^{\lambda,\mu}(\Zp,M)$ denote the space of functions such that $\vm(f(x)-f(y)) \geq p^\lambda \cdot p^i +\mu$ whenever $\vp(x-y) \geq i$, for all $x,y \in \Zp$ and $i \geq 0$, and $\HH^\lambda(\Zp,M) = \cup_{\mu \in \RR} \HH^{\lambda,\mu}(\Zp,M)$ and $\HH(\Zp,M) = \cup_{\lambda \in \RR} \HH^\lambda(\Zp,M)$. 

For example, if $M = \kf \dcroc{X}$ with $\vm=\vx$, then  $[a \mapsto (1+X)^a] \in\HH^{0,0}(\Zp,M)$. Indeed, $(1+X)^a - (1+X)^{a + p^i b} = (1+X)^a (1-(1+X^{p^i})^b) \in X^{p^i} \kf \dcroc{X}$ if $i \geq 0$.

\begin{rema}
\label{hcdcl}
The space $\HH^{\lambda,\mu}(\Zp,M)$ is closed in $C^0(\Zp,M)$.
\end{rema}

\begin{rema}
\label{isom}
If $\alpha : \Zp \to \Zp$ is an isometry, then $f : \Zp \to M$ belongs to $\HH^{\lambda,\mu}(\Zp,M)$ if and only if $f \circ \alpha \in \HH^{\lambda,\mu}(\Zp,M)$
\end{rema}

\begin{prop}
\label{hrng}
Suppose that $M$ is a ring, and that $\vm(mm') \geq \vm(m) + \vm(m')$ for all $m,m' \in M$. If $c \in \RR$, let $M_c= M^{\vm \geq c}$. 

\begin{enumerate}
\item If $f \in \HH^{\lambda,\mu}(\Zp,M_c)$ and $g \in \HH^{\lambda,\nu}(\Zp,M_d)$, and $\xi=\min(\mu+d,\nu+c)$, then $fg \in \HH^{\lambda,\xi}(\Zp,M_{c+d})$.
\item If $\lambda,\mu \in \RR$, then $\HH^{\lambda,\mu}(\Zp,M_0)$ is a subring of $C^0(\Zp,M)$.
\item If $\lambda \in \RR$, then $\HH^{\lambda}(\Zp,M)$ is a subring of $C^0(\Zp,M)$
\item
 If $d \geq 1$, we see $\GL_d(M)$ as a subset of the valued $E$-vector space $\M_d(M)$. If $\lambda,\nu \in \RR$ and $Q \in \HH^{\lambda}(\Zp,\GL_d(M))$ are such that $\vm(\det Q(x)) \leq \nu$ for all $x \in \Zp$, then $Q^{-1} \in \HH^{\lambda}(\Zp,\GL_d(M))$.
\end{enumerate}
\end{prop}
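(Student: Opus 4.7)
For part (1), the plan is to use the standard ``add and subtract'' decomposition
\[ f(x)g(x) - f(y)g(y) = (f(x)-f(y))\,g(x) + f(y)\,(g(x)-g(y)). \]
The submultiplicativity $\vm(mm') \geq \vm(m) + \vm(m')$ combined with $\vm(f(y)) \geq c$, $\vm(g(x)) \geq d$, and the super-H\"older bounds on $f$ and $g$, gives, via the non-archimedean triangle inequality, that $\vm(fg(x)-fg(y)) \geq p^\lambda p^i + \min(\mu+d,\nu+c) = p^\lambda p^i + \xi$ whenever $\vp(x-y) \geq i$. The bound $\vm(fg(x)) \geq c+d$ shows that the product indeed takes values in $M_{c+d}$.

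For (2), I specialize (1) to $c=d=0$ and $\nu=\mu$ to obtain closure under multiplication, while closure under addition is immediate from the non-archimedean triangle inequality applied to $(f+g)(x)-(f+g)(y)$. For (3), given $f \in \HH^{\lambda,\mu}(\Zp,M)$ and $g \in \HH^{\lambda,\nu}(\Zp,M)$, the key observation is that the continuity of $f$ and $g$ on the compact space $\Zp$, together with the fact that the balls $M^{\vm \geq c}$ are open, forces $\vm \circ f$ and $\vm \circ g$ to be bounded below by finite constants $c,d \in \RR$. Thus $f$ and $g$ actually lie in $\HH^{\lambda,\mu}(\Zp,M_c)$ and $\HH^{\lambda,\nu}(\Zp,M_d)$ respectively, and (1) applies to give $fg \in \HH^{\lambda,\xi}(\Zp,M_{c+d}) \subset \HH^\lambda(\Zp,M)$.

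Part (4) is the main point and the place where I expect the main obstacle. The plan is to use Cramer's formula $Q^{-1} = \det(Q)^{-1} \cdot \tco Q$. The entries of the adjugate $\tco Q$ are polynomial expressions in the entries of $Q$, so by (2) and (3) applied entrywise, $\tco Q$ has entries in $\HH^\lambda(\Zp,M)$. It then suffices to show that $\delta^{-1} : x \mapsto \det Q(x)^{-1}$ lies in $\HH^\lambda(\Zp,M)$, after which a final application of (3) assembles $Q^{-1}$. For this remaining step, I would expand
\[ \delta(x)^{-1} - \delta(y)^{-1} = (\delta(y) - \delta(x)) \cdot \delta(x)^{-1}\delta(y)^{-1}. \]
Since $\delta = \det Q$ is polynomial in entries of $Q$, we have $\delta \in \HH^\lambda(\Zp,M)$ by (3), so the factor $\delta(y)-\delta(x)$ supplies the $p^\lambda p^i$ growth. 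The uniform upper bound $\vm(\det Q(x)) \leq \nu$, combined with multiplicativity of $\vm$ on units, yields the uniform lower bound $\vm(\delta(x)^{-1}\delta(y)^{-1}) \geq -2\nu$; this merely shifts the additive constant $\mu$ by $-2\nu$ but preserves the super-H\"older type, so $\delta^{-1} \in \HH^\lambda(\Zp,M)$. The hard point is precisely the uniform control of $\delta^{-1}$, which is exactly what the hypothesis $\vm(\det Q) \leq \nu$ is there to provide.
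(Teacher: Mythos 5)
Your proof is correct and follows essentially the same route as the paper's: the same ``add and subtract'' decomposition $(fg)(x)-(fg)(y)=(f(x)-f(y))g(x)+f(y)(g(x)-g(y))$ for (1), with (2) and (3) deduced from it, and for (4) the reduction via $Q^{-1}=\tco(Q)\cdot\det(Q)^{-1}$ to the scalar case, where the identity $\delta(x)^{-1}-\delta(y)^{-1}=(\delta(y)-\delta(x))\delta(x)^{-1}\delta(y)^{-1}$ does the work. You fill in two points that the paper leaves implicit, both correctly: in (3), that continuity of $f,g$ on the compact space $\Zp$ forces $\vm\circ f,\vm\circ g$ to be bounded below (so that (1) applies); and in (4), that the hypothesis $\vm(\det Q)\leq\nu$ together with multiplicativity of $\vm$ on units yields the needed uniform lower bound on $\vm(\delta^{-1})$. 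On the latter point, note that the stated submultiplicativity $\vm(mm')\geq\vm(m)+\vm(m')$ alone only gives $\vm(\delta^{-1})\leq-\vm(\delta)$, which is the wrong direction; both your proof and the paper's tacitly use that $\vm$ is genuinely multiplicative on units, as it is in all the intended applications where $M$ is a subring of a valued field.
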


\begin{proof}
Items (2) and (3) follow from item (1), which we now prove.
If $x,y \in \Zp$, then
\[ (fg)(x) - (fg)(y) = (f(x)-f(y))g(x) + (g(x)-g(y))f(y),\]
which implies the claim. We now prove (4). If $d=1$, then 
\[ Q^{-1}(y) - Q^{-1}(x) = \frac{Q(x)-Q(y)}{Q(x)Q(y)}, \]
which implies the claim. If $d \geq 1$, we can write $Q^{-1} = \tco(Q) \cdot \det(Q)^{-1}$, and the claim results from (3), and (4) applied to $d=1$.
\end{proof}

\begin{rema}
\label{notting}
Take $u \in X + X^2 \kf \dcroc{X}$, and let $u^{\circ n}$ be $u$ composed with itself $n$ times. Sen's theorem (\cite{S69}, theorem 1) implies that $\vx(u^{\circ p^k}(X)-X) \geq p^k$ if $k \geq 0$, so that $\vx(u^{\circ x}-u^{\circ y}) \geq p^i$ if $\vp(x-y) \geq i$. This implies that the map $\ZZ_{\geq 0} \to X + X^2 \kf \dcroc{X}$, given by $n \mapsto u^{\circ n}$, extends to a super-H\"older function on $\Zp$.
\end{rema}

\subsection{Super-H\"older vectors}
\label{secvec}

We now assume that $M$ is endowed with an $\kf$-linear action by isometries of a group $\Gamma$, where $\Gamma$ is isomorphic to $\Zp$, via a coordinate map $c$. If $m \in M$, let $\orb_m : \Gamma \to M$ denote the function defined by $\orb_m(a) = a \cdot m$, so that $\orb_m \circ c^{-1}$ is a function $\Zp \to M$. 

\begin{defi}
\label{defshvec}
Let $M^{\Gamma\hyphen\sh,\lambda,\mu}$ denote the set of $m \in M$ such that $\orb_m  \circ c^{-1} \in \HH^{\lambda,\mu}(\Zp,M)$, and let $M^{\Gamma\hyphen\sh,\lambda}$ and $M^{\Gamma\hyphen\sh}$ be the corresponding sub-$\kf$-vector spaces of $M$.
\end{defi}

This definition should be seen as a characteristic $p$ analogue of the locally analytic vectors of a Banach representation of a $p$-adic Lie group, as defined in \S 7 of \cite{ST03}. The requirement that $\Gamma$ acts by isometries is the analogue of the condition that the norm be invariant.

\begin{rema}
\label{mcont}
We assume that $\Gamma$ acts by isometries on $M$, but not that $\Gamma$ acts continuously on $M$, namely that $\Gamma \times M \to M$ is continuous. However, let $M^{\cont}$ denote the set of $m \in M$ such that $\orb_m \circ c^{-1} : \Zp \to M$ is continuous. It is easy to see that $M^{\cont}$ is a closed sub-$\kf$-vector space of $M$, and that $\Gamma \times M^{\cont} \to M^{\cont}$ is continuous (compare with \S 3 of \cite{E17}). We then have $M^{\sh} \subset M^{\cont}$.
\end{rema}

\begin{lemm}
\label{mshalt}
We have $m \in M^{\Gamma\hyphen\sh,\lambda,\mu}$ if and only if $\vm(g \cdot m - m) \geq p^\lambda \cdot p^i +\mu$ for all $g \in \Gamma$ such that $c(g) \in p^i \Zp$.
\end{lemm}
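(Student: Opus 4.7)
The plan is to unpack the definitions on both sides and use the two structural properties at hand: that $c : \Gamma \to \Zp$ is a group isomorphism, and that $\Gamma$ acts on $M$ by isometries.

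First I would write out what membership in $M^{\Gamma\hyphen\sh,\lambda,\mu}$ actually means: for all $x,y \in \Zp$ with $\vp(x-y) \geq i$, setting $g = c^{-1}(x)$ and $h = c^{-1}(y)$, one has $\vm(g \cdot m - h \cdot m) \geq p^\lambda \cdot p^i + \mu$. Equivalently, using that $c$ is a bijection, the condition is that for all $g,h \in \Gamma$ with $\vp(c(g)-c(h)) \geq i$, we have $\vm(g \cdot m - h \cdot m) \geq p^\lambda \cdot p^i + \mu$.

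For the "only if" direction, I would just take $h$ to be the identity element of $\Gamma$ (so $c(h)=0$). Then $\vp(c(g)-c(h)) = \vp(c(g)) \geq i$ exactly under the hypothesis $c(g) \in p^i \Zp$, and the general condition yields $\vm(g\cdot m - m) \geq p^\lambda \cdot p^i + \mu$ as required.

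For the "if" direction, given $g,h \in \Gamma$ with $\vp(c(g)-c(h)) \geq i$, I would rewrite
\[ g \cdot m - h \cdot m = h \cdot (h^{-1} g \cdot m - m). \]
Since $\Gamma$ acts by isometries, $\vm(g \cdot m - h \cdot m) = \vm(h^{-1} g \cdot m - m)$. Because $c$ is a group homomorphism to $\Zp$, $c(h^{-1}g) = c(g) - c(h) \in p^i \Zp$, so the hypothesis applies to $h^{-1}g$ and gives the desired inequality. There is no real obstacle here; the only thing to make sure of is to invoke the isometry property of the $\Gamma$-action (which is part of the standing assumption in \S\ref{secvec}) and the fact that $c$ is a group isomorphism, not merely a bijection.
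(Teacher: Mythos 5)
Your proof is correct and is essentially the paper's argument written out in full: the paper's one-line proof is exactly the observation $\vm(hg\cdot m - h\cdot m) = \vm(g\cdot m - m)$ that you use in the "if" direction, combined implicitly with $c$ being a group isomorphism. Nothing further is needed.
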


\begin{proof}
Since $\Gamma$ acts by isometries, we have $\vm(hg \cdot m - h \cdot m)= \vm (g \cdot m - m)$ for all $g,h \in \Gamma$.
\end{proof}

\begin{lemm}
\label{cplmsh}
The space $M^{\Gamma \hyphen \sh,\lambda,\mu}$ is a closed sub-$\kf$-vector space of $M$.
\end{lemm}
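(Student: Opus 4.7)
My plan is to use the alternative characterization from Lemma \ref{mshalt}, which rewrites $M^{\Gamma \hyphen \sh, \lambda, \mu}$ as an intersection of level sets of a family of continuous linear maps; from there both the vector space and closedness properties will follow formally.

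More precisely, first I would observe that by Lemma \ref{mshalt} we can write
\[ M^{\Gamma \hyphen \sh, \lambda, \mu} = \bigcap_{i \geq 0} \bigcap_{\substack{g \in \Gamma \\ c(g) \in p^i \Zp}} \bigl\{ m \in M : \vm(g \cdot m - m) \geq p^\lambda \cdot p^i + \mu \bigr\}. \]
For fixed such $i$ and $g$, the map $T_g : M \to M$ defined by $T_g(m) = g \cdot m - m$ is $\kf$-linear (since $\Gamma$ acts $\kf$-linearly), and it is continuous since $\Gamma$ acts by isometries, so $m \mapsto g \cdot m$ is continuous.

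Next, for any real $c$ the set $M^{\vm \geq c}$ is a closed sub-$\kf$-vector space of $M$: it is closed under addition by the ultrametric inequality, closed under multiplication by $\kf^\times$ because $\vm(xm) = \vm(m)$ for $x \in \kf^\times$, and closed in $M$ because $M$ is separated for the $\vm$-adic topology. Therefore each set appearing in the intersection above, being $T_g^{-1}(M^{\vm \geq p^\lambda \cdot p^i + \mu})$, is a closed sub-$\kf$-vector space of $M$.

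Finally, an arbitrary intersection of closed sub-$\kf$-vector spaces of $M$ is again a closed sub-$\kf$-vector space, so the result follows. I do not expect any step here to be the main obstacle; the only point requiring a bit of care is justifying that the alternative characterization of Lemma \ref{mshalt} lets us avoid mentioning continuity of the orbit map explicitly, which is essential since $M^{\Gamma \hyphen \sh, \lambda, \mu}$ is not a priori contained in $M^{\cont}$ by definition alone—but the definition via $\HH^{\lambda,\mu}(\Zp,M)$ does guarantee continuity of $\orb_m \circ c^{-1}$, and the equivalent formulation in Lemma \ref{mshalt} makes that continuity automatic from the estimate at $i = 0$.
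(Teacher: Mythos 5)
Your proof is correct, and since the paper states this lemma without proof, your argument via Lemma \ref{mshalt} — writing $M^{\Gamma\hyphen\sh,\lambda,\mu}$ as an intersection of the preimages $T_g^{-1}(M^{\vm \geq p^\lambda\cdot p^i+\mu})$ under the $\kf$-linear, $1$-Lipschitz maps $T_g(m)=g\cdot m-m$, and noting that each $M^{\vm\geq c}$ is a closed subspace — is exactly the intended one. Your closing remark is also well taken: the estimate defining $\HH^{\lambda,\mu}(\Zp,M)$ already forces uniform continuity of the orbit map, so no separate continuity hypothesis is lost in passing to the pointwise characterization of Lemma \ref{mshalt}.
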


\begin{lemm}
\label{shsbgr}
If $k \geq 0$ and $\Gamma' = c^{-1}(p^k \Zp)$, then $g \mapsto c(g)/p^k$ is a coordinate on $\Gamma'$, and $M^{\Gamma \hyphen \sh,\lambda} = M^{\Gamma'\hyphen \sh,\lambda+k}$.
\end{lemm}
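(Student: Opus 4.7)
The plan is to translate both super-Hölder conditions via Lemma \ref{mshalt} into systems of inequalities on $\vm(g \cdot m - m)$ and compare them directly. The first assertion, that $g \mapsto c(g)/p^k$ is a coordinate on $\Gamma'$, is a formality: since $c : \Gamma \to \Zp$ is a group isomorphism and $\Gamma' = c^{-1}(p^k \Zp)$, the restriction of $c$ identifies $\Gamma'$ with $p^k\Zp$, and division by $p^k$ yields the required isomorphism $c' : \Gamma' \to \Zp$.

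For the equality of spaces, I would first rewrite the membership $m \in M^{\Gamma\hyphen\sh,\lambda,\mu}$ as: for every $i \geq 0$ and every $g \in \Gamma$ with $c(g) \in p^i \Zp$, $\vm(g \cdot m - m) \geq p^\lambda \cdot p^i + \mu$. Similarly, $m \in M^{\Gamma'\hyphen\sh,\lambda+k,\mu'}$ reads: for every $i \geq 0$ and every $g \in \Gamma'$ with $c'(g) \in p^i \Zp$, $\vm(g \cdot m - m) \geq p^{\lambda+k} \cdot p^i + \mu'$. Substituting $j = i+k$, the latter rewrites as: for every $j \geq k$ and every $g \in \Gamma$ with $c(g) \in p^j \Zp$, $\vm(g \cdot m - m) \geq p^\lambda \cdot p^j + \mu'$. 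Thus the $\Gamma'$-condition is precisely the $j \geq k$ tail of the $\Gamma$-condition, and the inclusion $M^{\Gamma\hyphen\sh,\lambda,\mu} \subset M^{\Gamma'\hyphen\sh,\lambda+k,\mu}$ is immediate.

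The only nontrivial step is the reverse inclusion, which requires extending the $\Gamma'$-bound to the finitely many missing indices $0 \leq j < k$. Here I would invoke the isometry hypothesis: the ultrametric triangle inequality together with $\vm(g \cdot m) = \vm(m)$ yields the uniform bound $\vm(g \cdot m - m) \geq \vm(m)$ for every $g \in \Gamma$. Setting $\mu'' = \min(\mu',\, \vm(m) - p^{\lambda+k-1})$, one checks that $p^\lambda \cdot p^j + \mu'' \leq \vm(m)$ when $0 \leq j < k$ and $p^\lambda \cdot p^j + \mu'' \leq p^\lambda \cdot p^j + \mu'$ when $j \geq k$, so $m \in M^{\Gamma\hyphen\sh,\lambda,\mu''}$. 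Taking unions over $\mu, \mu'$ gives the claimed equality. The only delicate point, and what I expect to be the main (mild) obstacle, is arranging $\mu''$ to depend only on $m$ through $\vm(m)$ and not on the particular $g$, which is exactly what the isometry bound guarantees.
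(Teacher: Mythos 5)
Your proof is correct, and it takes a mildly different route from the paper's for the non-trivial (reverse) inclusion. Both you and the paper agree on the easy inclusion and on translating membership via Lemma~\ref{mshalt} into scale-by-scale inequalities, with the $\Gamma'$-condition being the $j \geq k$ tail of the $\Gamma$-condition. For the finitely many missing scales $0 \leq j < k$, you invoke the uniform bound $\vm(g \cdot m - m) \geq \vm(m)$, which holds for \emph{every} $g \in \Gamma$ by the ultrametric inequality and the isometry hypothesis, and then simply lower $\mu$ to $\min(\mu', \vm(m) - p^{\lambda+k-1})$. The paper instead decomposes $g = g_k h$ with $g_k \in \Gamma'$ and $c(h)$ ranging over coset representatives $\{1,\dots,p^k-1\}$, writes $g \cdot m - m = g_k(h \cdot m - m) + (g_k \cdot m - m)$, and uses the isometry property and the $\Gamma'$-super-H\"older bound to get $\vm(g \cdot m - m) \geq \min(\mu,\nu)$ where $\nu = \min_h \vm(h \cdot m - m)$; it then shifts $\mu$ accordingly. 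Your observation that the coset decomposition is unnecessary, because the crude bound $\vm(m)$ is already a finite lower bound valid at the finitely many low scales, is correct and slightly more economical; the paper's $\min(\mu,\nu)$ is a potentially sharper bound (indeed $\nu \geq \vm(m)$) but that extra precision plays no role since the constant $\mu$ is discarded in the union defining $M^{\Gamma\hyphen\sh,\lambda}$.
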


\begin{proof}
It is clear that $M^{\Gamma \hyphen \sh,\lambda} \subset M^{\Gamma' \hyphen \sh,\lambda+k}$. Conversely, let $C= \{1,\hdots,p^k-1\}$. If $m \in M^{\Gamma' \hyphen \sh,\lambda+k,\mu}$,  let $\nu = \min_{c(h) \in C} \vm(h \cdot m - m)$. If $g \in \Gamma \setminus \Gamma'$, we can write $g=g_k h$ with $c(h) \in C$ and $g_k \in \Gamma'$. We then have $g \cdot m - m = ( g_k \cdot h \cdot m - g_k \cdot m ) + (g_k \cdot m - m)$ so that $\vm(g \cdot m-m) \geq \min(\mu,\nu)$.

This implies that $m \in M^{\Gamma \hyphen \sh,\lambda,\mu'}$ with $\mu' = \min(\mu,\nu)-p^{k+\lambda}$.
\end{proof}

In particular, the space $M^{\Gamma'\hyphen\sh}$ does not depend on the choice of open subgroup $\Gamma' \subset \Gamma$, and we denote it by $M^{\sh}$.

\begin{prop}
\label{propmsh}
Suppose that $M$ is a ring, and that $g(mm') = g(m)g(m')$ and $\vm(mm') \geq \vm(m) + \vm(m')$ for all $m,m' \in M$ and $g \in \Gamma$.
\begin{enumerate}
\item If $v \in \RR$ and $m,m' \in M^{\Gamma \hyphen \sh,\lambda,\mu} \cap M^{\vm \geq v}$, then $m \cdot m' \in M^{\Gamma \hyphen \sh,\lambda,\mu+v}$;
\item If $m \in M^{\Gamma \hyphen \sh,\lambda,\mu} \cap M^\times$, then $1/m \in M^{\Gamma \hyphen \sh,\lambda,\mu-2 \vm(m)}$.
\end{enumerate}
\end{prop}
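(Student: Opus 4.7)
The plan is to reduce both statements, via Lemma \ref{mshalt}, to producing the required lower bound on $\vm(g\cdot n-n)$ for all $g\in\Gamma$ with $c(g)\in p^i\Zp$, where $n$ is $mm'$ in (1) and $1/m$ in (2).

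For (1) I would proceed exactly as in the proof of Proposition \ref{hrng}(1). Because $g$ is a ring endomorphism, the Leibniz-style identity
\[ g(mm') - mm' \;=\; \bigl(g(m)-m\bigr)\cdot g(m') \;+\; m\cdot\bigl(g(m')-m'\bigr) \]
holds. Submultiplicativity of $\vm$, combined with the isometry property $\vm(g(m'))=\vm(m')\geq v$ and the hypothesis $\vm(m)\geq v$, bounds each summand below by $(p^\lambda p^i+\mu)+v$, which gives the claim with constant $\mu+v$.

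For (2) I would use that $g(1/m)=1/g(m)$ (since $g$ is a ring automorphism of $M$, the action being by invertible ring homomorphisms) and the identity
\[ g(1/m) - 1/m \;=\; (m-g(m)) \cdot \frac{1}{m} \cdot \frac{1}{g(m)}. \]
The isometry property gives $\vm(1/g(m))=\vm(g(1/m))=\vm(1/m)$, so submultiplicativity together with the hypothesis on $m$ yields
\[ \vm(g(1/m)-1/m) \;\geq\; \vm(m-g(m)) + 2\vm(1/m) \;\geq\; p^\lambda p^i+\mu-2\vm(m), \]
from which the claim follows by Lemma \ref{mshalt}.

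The only subtle point is the last inequality in (2), which implicitly uses $\vm(1/m)=-\vm(m)$ for $m\in M^\times$. From submultiplicativity alone one only deduces $\vm(1/m)\leq -\vm(m)$, but on the units of all examples of interest in the paper (the $X$-adic valuations on $\efont$ and $\et$) $\vm$ is actually multiplicative, so equality holds and the bound goes through. Apart from this minor point, both parts reduce to routine manipulations.
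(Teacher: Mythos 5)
Your proposal takes essentially the same route as the paper's proof: the Leibniz identity for the product in (1) (which is exactly how Proposition~\ref{hrng}(1) is proved), and the quotient formula $g(1/m)-1/m=(m-g(m))/(g(m)m)$ for (2). The subtlety you flag is genuine: the stated hypothesis $\vm(mm')\geq \vm(m)+\vm(m')$ alone only gives $\vm(1/m)\leq -\vm(m)$, hence the bound $\mu+2\vm(1/m)$ rather than $\mu-2\vm(m)$; the paper's one-line proof of (2) relies implicitly on the same extra equality $\vm(1/m)=-\vm(m)$, which holds in every ring to which the proposition is later applied (integral domains such as $\efont^+$, $\et^+$ where $\vx$ is multiplicative on units).
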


\begin{proof}
Item (1) follows from prop \ref{hrng}
and lemma \ref{mshalt}. Item (2) follows from 
\[ g\left(\frac 1m\right) - \frac 1m = \frac{m-g(m)}{g(m) m}. \]
\end{proof}

\begin{rema}
\label{jerclocan}
One can extend the definition of super-H\"older vectors to the setting of a $p$-adic Lie group $G$ acting by isometries on a valued $\kf$-vector space $M$ as follows (the details are in our paper \emph{Super-H\"older vectors and the field of norms}). Let $P$ be a nice enough open pro-$p$ subgroup of $G$. We say that $m \in M$ is super-H\"older if and only if there exists $\lambda,\mu \in \RR$ and $e>0$ such that $\vm(g\cdot m - m) \geq p^{\lambda + ei} + \mu$ whenever $g \in P^{p^i}$, for all $i \geq 0$. Juan Esteban Rodr\'{\i}guez Camargo pointed out to us that there is a similar purely metric characterization of locally analytic vectors for a $p$-adic Lie group acting on a Banach space.
\end{rema}

\subsection{Mahler's theorem}
\label{secmahl}

In this section, we prove a characteristic $p$ analogue of Mahler's theorem for continuous functions $\Zp \to \Qp$. We then give a characterization of super-H\"older functions in terms of their Mahler expansions. If $z \in \Zp$ and $n \geq 0$, then $\binom{z}{n} \in \Zp$ and we still denote by $\binom{z}{n}$ its image in $\Fp$.

\begin{theo}
\label{mahlthm}
If $\{m_n\}_{n \geq 0}$ is a sequence of $M$ such that $m_n \to 0$,  the function $f : \Zp \to M$ given by $f(z) = \sum_{n \geq 0} \binom{z}{n} m_n$ belongs to $C^0(\Zp,M)$. We  have $m_n = (-1)^n \sum_{i=0}^n(-1)^i\binom{n}{i} f(i)$ and $\inf_{z \in \Zp} \vm(f(z)) =\inf_{n \geq 0} \vm(m_n)$.

Conversely, if $f \in C^0(\Zp,M)$,  there exists a unique sequence $\{m_n(f)\}_{n \geq 0}$ such that $m_n(f) \to 0$ and such that $f(z) = \sum_{n \geq 0} \binom{z}{n} m_n(f)$.
\end{theo}

\begin{proof}
Our proof follows Bojanic's proof (cf \cite{B74}) of Mahler's theorem. The first part of the theorem is easy: $f$ is continuous since it is a uniform limit of continuous functions, and if $f(z) = \sum_{n \geq 0} \binom{z}{n} m_n$, then $\vm(f(z)) \geq \inf_{n \geq 0} \vm(m_n)$. The fact that $m_n = (-1)^n \sum_{i=0}^n(-1)^i\binom{n}{i} f(i)$ is a classical exercise, given that $f(k) = \sum_{j=0}^k \binom{k}{j} m_j$ for all $k \geq 0$, and it implies that $\vm(m_n) \geq \inf_{z \in \Zp} \vm(f(z))$ for all $n$. In order to show the converse, it is enough to show that if $f$ is continuous and $m_n(f) = (-1)^n \sum_{i=0}^n(-1)^i\binom{n}{i} f(i)$, then $m_n(f) \to 0$. Indeed, the functions $f$ and $z \mapsto \sum_{n \geq 0} \binom{z}{n} m_n(f)$ are then two continuous functions on $\Zp$ with the same values on $\ZZ_{\geq 0}$, so that they are equal.

We now show that $m_n(f) \to 0$. If $s \geq 0$, there exists $t$ such that if $\vp(x-y) \geq t$ then $\vm(f(x)-f(y)) \geq s$, as $f$ is uniformly continuous. Take $n \geq p^t$ and write $n = qp^t+r$ with $0 \leq r < p^t$ and $q \geq 1$. Writing $i=a+jp^t$, we get 
\[ m_n(f) = \sum_{a=0}^{p^t-1}\sum_{j = 0}^q
(-1)^{n+a+jp^t}\binom{n}{a+jp^t}f(a+jp^t). \]
As we are in characteristic $p$, Lucas' theorem implies that $\binom{n}{a+jp^t} =
\binom{r}{a}\binom{q}{j}$, so that:
\[ m_n(f) = 
\sum_{a=0}^{p^t-1}(-1)^{n+a}\binom{r}{a}
\left(\sum_{j=0}^q (-1)^j\binom{q}{j}f(a+jp^t)\right). \]
As $\left(\sum_{j=0}^q (-1)^j\binom{q}{j} \right) \cdot f(a)=0$, and $\vm(f(a+jp^t)-f(a)) \geq s$ for all $j$, we get that $\vm(m_n(f)) \geq s$ if $n \geq p^t$.
\end{proof}

We now give a characterization of super-H\"older functions in terms of their Mahler expansions.

\begin{prop}
\label{shmahl}
If $f \in C^0(\Zp,M)$, then $f \in \HH^{\lambda,\mu}(\Zp,M)$ if and only if for all $i \geq 0$, we have $\vm(m_n(f)) \geq p^\lambda \cdot p^i +\mu$ whenever $n \geq p^i$.
\end{prop}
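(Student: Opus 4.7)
The plan is to prove the two directions separately, using Lucas' theorem as the main combinatorial input (exactly as in the proof of Theorem \ref{mahlthm}).

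For the ``if'' direction, I would start with the key observation that, by Lucas' theorem extended by continuity from $\ZZ_{\geq 0}$ to $\Zp$, the residue $\binom{z}{n} \in \Fp$ depends only on $z \bmod p^i$ when $0 \leq n < p^i$. Consequently, if $\vp(x-y) \geq i$, then $\binom{x}{n} = \binom{y}{n}$ in $\Fp$ for every $n < p^i$. Writing the Mahler expansion
\[ f(x) - f(y) = \sum_{n \geq p^i} \left(\binom{x}{n} - \binom{y}{n}\right) m_n(f), \]
each surviving term has $\vm \geq \vm(m_n(f))$ since $\binom{x}{n} - \binom{y}{n} \in \Fp$. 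The hypothesis that $\vm(m_n(f)) \geq p^\lambda \cdot p^j + \mu$ whenever $n \geq p^j$ then gives, for each $n \geq p^i$ (pick $j = i$), the bound $\vm(m_n(f)) \geq p^\lambda \cdot p^i + \mu$, and hence the required estimate on $\vm(f(x) - f(y))$.

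For the ``only if'' direction, I would simply reuse the explicit identity derived in the proof of Theorem \ref{mahlthm}: for $n \geq p^t$, writing $n = qp^t + r$ with $0 \leq r < p^t$ and $q \geq 1$, Lucas' theorem yields
\[ m_n(f) = \sum_{a=0}^{p^t-1} (-1)^{n+a} \binom{r}{a} \left( \sum_{j=0}^q (-1)^j \binom{q}{j} f(a+jp^t) \right), \]
and the inner sum equals $\sum_{j=0}^q (-1)^j \binom{q}{j} (f(a+jp^t) - f(a))$ since $\sum_j (-1)^j \binom{q}{j} = 0$ for $q \geq 1$. If $f \in \HH^{\lambda,\mu}(\Zp,M)$, then each $f(a+jp^t) - f(a)$ has $\vm \geq p^\lambda \cdot p^t + \mu$, so $\vm(m_n(f)) \geq p^\lambda \cdot p^t + \mu$. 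Applied with $t = i$ this is the desired bound.

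Neither direction should be a serious obstacle: the ``if'' direction is essentially a one-line consequence of Lucas' theorem, and the ``only if'' direction is already present inside the proof of Theorem \ref{mahlthm}. The only thing to be mildly careful about is the characteristic $p$ interpretation of the binomial coefficients: the integer $\binom{x}{n} - \binom{y}{n}$ reduces to zero in $\Fp$ for $n < p^i$ when $x \equiv y \bmod p^i$, which is what allows the sum to be truncated to $n \geq p^i$ without any loss.
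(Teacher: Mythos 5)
Your proof is correct. The ``if'' direction is essentially the paper's argument: the paper uses the congruence $\binom{a}{j}=\binom{a+p^i}{j}$ in $\Fp$ for $j<p^i$, truncates the Mahler expansion of $f(z+p^i)-f(z)$ to $n\geq p^i$, and then iterates and passes to the limit to cover all $y$ with $\vp(y-x)\geq i$; you invoke the (equivalent, slightly more general) fact that $\binom{z}{n}\bmod p$ depends only on $z\bmod p^i$ for $n<p^i$ and truncate $f(x)-f(y)$ directly, which saves the iteration-plus-continuity step. The ``only if'' direction, however, is genuinely different from the paper's. The paper fixes a coset $a+p^i\Zp$, forms $g(z)=f(a+p^iz)-f(a)$, notes that $\vm(m_\ell(g))\geq\inf_z\vm(g(z))\geq p^{\lambda}p^i+\mu$, computes $m_\ell(g)=\sum_{t=0}^{p^i-1}\binom{a}{t}m_{t+p^i\ell}(f)$, and recovers the individual $m_n(f)$ by inverting the unipotent matrix $\bigl(\binom{a}{t}\bigr)_{0\leq a,t\leq p^i-1}$. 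You instead quantify Bojanic's computation already carried out in the proof of Theorem \ref{mahlthm}: after regrouping via Lucas and subtracting $f(a)$ using $\sum_{j}(-1)^j\binom{q}{j}=0$, every term in the explicit formula for $m_n(f)$ is an $\Fp$-multiple of some $f(a+jp^t)-f(a)$, which the super-H\"older hypothesis bounds directly. Your route is shorter and requires no inversion; the paper's route isolates the structural identity relating the Mahler coefficients of $f$ to those of its restrictions to cosets, at the cost of the extra linear-algebra step. Both arguments are complete and yield the same constants.
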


\begin{proof}
Take $f \in C^0(\Zp,M)$ such that $\vm(m_n(f)) \geq p^\lambda \cdot p^i +\mu$ whenever $n \geq p^i$. Recall that if $a \in \Zp$ and $i \geq 1$, then for all $j < p^i$ we have $\binom{a}{j} = \binom{a+p^i}{j}$ in $\Fp$. If $z \in \Zp$ and $i \geq 1$, then
\[ f(z+p^i)-f(z) =
\sum_{n \geq 0} m_n(f) \left(\binom{z+p^i}{n}-\binom{z}{n}\right) =
\sum_{n \geq p^i}m_n(f)\left(\binom{z+p^i}{n}-\binom{z}{n}\right). \]
Since $\vm(m_n(f)) \geq p^\lambda \cdot p^i +\mu$ whenever $n \geq p^i$, the formula above implies that
$\vm(f(x+p^i)-f(x)) \geq p^\lambda \cdot p^i +\mu$.
Iterating this, we get that $\vm(f(x+kp^i)-f(x)) \geq p^\lambda \cdot p^i +\mu$ for all $k \in \ZZ_{\geq 0}$. By continuity, this implies that $\vm(f(y)-f(x)) \geq p^\lambda \cdot p^i +\mu$ for all $x,y \in \Zp$ such that $\vp(y-x) \geq i$.

Assume now that $f \in \HH^{\lambda,\mu}(\Zp,M)$. We prove that for all $i \geq 0$ and $n \geq p^i$, we have $\vm(m_n(f)) \geq p^\lambda \cdot p^i +\mu$. Fix $i \geq 0$ and take $a \in \{0,\dots,p^i-1\}$. Define a function $g$ on $\Zp$ by $g(z) = f(a+p^i z)-f(a)$. By hypothesis, we have $\vm(g(z)) \geq p^\lambda \cdot p^i +\mu$ for all $z$. This implies that $\vm(m_n(g)) \geq p^\lambda \cdot p^i +\mu$ for all $n$. We now compute $m_n(g)$. We have
\begin{align*}
g(z) &= 
\sum_{n \geq 0}\left(\binom{a+p^iz}{n}-\binom{a}{n}\right)m_n(f) \\
&=
\sum_{n \geq p^i}\left(\binom{a+p^iz}{n}-\binom{a}{n}\right)m_n(f) 
=
\sum_{n \geq p^i}\binom{a+p^iz}{n}m_n(f),
\end{align*}
since $a \leq p^i-1$. If we write $n = t+p^i \ell$, with $0 \leq t \leq p^i-1$ and $\ell \geq 1$, then
$\binom{a+p^i z}{n} = \binom{a}{t}\binom{z}{\ell}$. This implies that
\[ g(z) =\sum_{t=0}^{p^i-1}\sum_{\ell \geq 1}\binom{a}{t}\binom{z}{\ell}m_{t+p^i\ell}(f), \]
which gives $m_\ell(g) =
\sum_{t=0}^{p^i-1}\binom{a}{t}m_{t+p^i\ell}(f)$
for all $\ell \geq 1$. This now implies that \[ \vm\left(\sum_{t=0}^{p^i-1}\binom{a}{t}m_{t+p^i\ell}(f)\right) \geq p^\lambda \cdot p^i +\mu \] 
for all $\ell \geq 1$ and $a \in \{0,\dots,p^i-1\}$.
The matrix $\left(\binom{a}{t}\right)_{0 \leq a,t \leq p^i-1}$ is unipotent with integral coefficients. Hence for a given $\ell \geq 1$, the above inequality implies that $\vm(m_{a+p^i\ell}(f)) \geq p^\lambda \cdot p^i +\mu$ for all $a \in \{0,\dots,p^i-1\}$. Writing $n \geq p^i$ as $n=a+p^i \ell$, we get $\vm(m_n(f)) \geq p^\lambda \cdot p^i +\mu$ for all $n \geq p^i$.
\end{proof}

\begin{rema}
\label{mahlvssh}
Let $\WW^{\lambda,\mu}(\Zp,M)$ denote the set of $f \in C^0(\Zp,M)$ such that $\vm(m_n(f)) \geq p^\lambda n +\mu$ for all $n \geq 0$. 

Prop \ref{shmahl} implies that $\WW^{\lambda,\mu}(\Zp,M) \subset \HH^{\lambda,\mu}(\Zp,M) \subset \WW^{\lambda-1,\mu}(\Zp,M)$.
\end{rema}

Prop \ref{shmahl} and remark \ref{mahlvssh} strengthen the analogy between
our definition of super-H\"older functions and the classical
theory of locally analytic functions. Indeed, if $f : \Zp \to
\Qp$ is a continuous function, and if $f(z) = \sum_{n \geq 0}
\binom{z}{n} m_n(f)$ is its Mahler expansion, then by a result of Amice
(\cite{A64}, see corollary I.4.8 of \cite{C10}), $f$ is locally analytic
if and only if there exists $\lambda,\mu \in \RR$ such that $\vp(m_n(f))
\geq p^\lambda \cdot n + \mu$ for all $n \geq 0$.

\begin{rema}
\label{gulotta}
Daniel Gulotta pointed out to us that Gulotta (in \S 3 of \cite{G19}), as well as Johansson and Newton (in \S 3.2 \cite{JN19}), had defined a generalization of locally analytic functions, for functions valued in certain general Tate $\Zp$-algebra. When $p=0$ in the algebra, their definition is equivalent to our definition of super-H\"older functions.
\end{rema}

\section{Decompletion of cyclotomic perfectoid fields}
\label{secdecomp}

Let $\efont^+  = \kf \dcroc{X}$. For $n \geq 0$, let $\efont^+_n  = \kf
\dcroc{X^{1/p^n}}$, so that $\efont^+ = \efont^+_0$. Let $\efont^+_\infty
= \cup_{n \geq 0} \efont^+_n$ and let $\et^+$ be the $X$-adic completion
of $\efont^+_\infty$. 
We denote by $\efont$, $\efont_n$, $\efont_\infty$, $\et$ the fields
$\efont^+[1/X]$, $\efont_n^+[1/X]$, $\efont_\infty^+[1/X]$, $\et^+[1/X]$
respectively.
The ring $\et^+$ is the ring of integers of the
field $\et = \et^+[1/X]$. If $\kf$ is perfect, then $\et$ is perfectoid. 

\subsection{The action of $\Zp^\times$}
\label{secgam}

The group $\Zp^\times$ acts continuously by isometries on each
$\efont^+_n$ by the formula $a \cdot X^{1/p^n} = (1+X^{1/p^n})^a-1$. This
action is compatible when $n$ varies, extends to the fields $\efont_n$,
and extends by continuity to $\et^+$ and $\et$.

\begin{rema}
\label{perfcyc}
If $\kf=\Fp$, then $\et$ is the tilt of $\widehat{\Qp(\mu_{p^\infty})}$ (see \S\ref{secfon} for more details). The group $\Gamma=\Gal(\Qp(\mu_{p^\infty})/\Qp)$ is isomorphic to $\Zp^\times$ via the cyclotomic character $\chi_{\cyc}$, and acts on $\et$ by $g(f) = \chi_{\cyc}(g) \cdot f$.
\end{rema}

If $k \geq 1$ (or $k \geq 2$ if $p=2$), let $\Gamma_k = 1+p^k \Zp$. The natural coordinate on $\Gamma_k$ is given by $1+p^k a \mapsto \log_p(1+p^k a) / p^k$. It differs from the coordinate $1+p^k a \mapsto a$ (which is not a group homomorphism) by an isometry. By remark \ref{isom}, the definition of $(\et^+)^{\Gamma_k \hyphen \sh,\lambda,\mu}$ does not depend on the choice of one of those coordinates, and we use $1+p^k a \mapsto a$.

\begin{prop}
\label{etnsh}
We have $\efont^+_n  = (\efont^+_n)^{\Gamma_k \hyphen \sh,k-n,0}$.
\end{prop}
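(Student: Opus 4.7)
The plan is to prove the nontrivial inclusion $\efont^+_n \subseteq (\efont^+_n)^{\Gamma_k\hyphen\sh,k-n,0}$; the opposite inclusion is immediate from the definition of super-Hölder vectors. By Lemma \ref{mshalt}, this reduces to verifying the uniform estimate $\vx(g \cdot f - f) \geq p^{k-n} \cdot p^i = p^{k-n+i}$ for every $i \geq 0$, every $g \in \Gamma_k$ with $c(g) \in p^i\Zp$ (equivalently, every $g \in \Gamma_{k+i}$), and every $f \in \efont^+_n$.

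The key computation is for the topological generator $Y := X^{1/p^n}$ of $\efont^+_n = \kf\dcroc{Y}$. Writing $g = 1+p^{k+i}a$ with $a \in \Zp$ and exploiting the characteristic-$p$ Frobenius identity $(1+Y)^{p^{k+i}} = 1+Y^{p^{k+i}}$, I would compute
\[
\delta := g \cdot Y - Y = (1+Y)\bigl[(1+Y^{p^{k+i}})^a - 1\bigr].
\]
Since $(1+W)^a - 1 \in W \cdot \kf\dcroc{W}$ for every $a \in \Zp$, this gives $\vY(\delta) \geq p^{k+i}$, equivalently $\vx(\delta) \geq p^{k+i}/p^n = p^{k-n+i}$, which is exactly the bound required. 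Note that this is uniform in $a$, and works uniformly whether $k-n+i$ is positive or negative.

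To pass from $Y$ to an arbitrary $f = \sum_{j \geq 0} c_j Y^j \in \efont^+_n$, I would use that $g$ acts by a continuous $\kf$-algebra endomorphism with $g \cdot Y = Y+\delta$, so that
\[
g \cdot f - f = \sum_{j \geq 1} c_j \bigl[(Y+\delta)^j - Y^j\bigr],
\]
and then factor $(Y+\delta)^j - Y^j = \delta \cdot \sum_{\ell=0}^{j-1} Y^\ell(Y+\delta)^{j-1-\ell}$, which is divisible by $\delta$. The non-Archimedean inequality then transfers the bound $\vx(\delta) \geq p^{k-n+i}$ to every partial sum, and hence to the full limit. I do not expect a substantive obstacle here; the only care required is to check that the displayed series converges $Y$-adically so that the ring-theoretic manipulations are legitimate, which follows from the crude estimate $\vY((Y+\delta)^j - Y^j) \geq j$ (each nonzero term in its binomial expansion has $Y$-valuation at least $j$).
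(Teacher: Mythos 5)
Your proof is correct and rests on the same key computation as the paper: the Frobenius identity $(1+Y)^{1+p^{k+i}a}=(1+Y)(1+Y^{p^{k+i}})^a$ for $Y=X^{1/p^n}$, giving $\vx(g\cdot Y-Y)\geq p^{k-n+i}$. The only (cosmetic) difference is in the extension from $Y$ to a general $f\in\kf\dcroc{Y}$: the paper appeals to the already-established structural facts that super-H\"older vectors form a ring (prop.~\ref{propmsh}) and a closed subspace (lemma~\ref{cplmsh}), whereas you reprove this in the case at hand by a direct term-by-term estimate on the expansion of $g\cdot f-f$, which is a self-contained but equivalent route.
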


\begin{proof}
We have $(1+X^{1/p^n})^{1+p^{k+i}b} = (1+X^{1/p^n}) \cdot (1+X^{p^{k+i-n}})^b$, so that
\[ \vx((1+X^{1/p^n})^{1+p^{k+i}b} - (1+X^{1/p^n}))   \geq p^{k-n} \cdot p^i. \]
This implies that $X^{1/p^n} \in (\efont^+_n)^{\Gamma_k \hyphen \sh,k-n,0}$. The claim now follows from prop \ref{propmsh} and lemma \ref{cplmsh}.
\end{proof}

Taking $n=0$ in prop \ref{etnsh}, we find that $\kf \dcroc{X} = \kf \dcroc{X}^{\Gamma_k \hyphen \sh,k}$. Let $\efont=\efont^+[1/X]$.

\begin{coro}
\label{egsk}
We have $\efont = \efont^{\Gamma_k \hyphen \sh,k}$.
\end{coro}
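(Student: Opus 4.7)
The plan is to prove both inclusions of the equality $\efont = \efont^{\Gamma_k \hyphen \sh,k}$. The inclusion $\efont^{\Gamma_k \hyphen \sh,k} \subset \efont$ is immediate since $\efont^{\Gamma_k\hyphen\sh,k}$ is by definition a sub-$\kf$-vector space of $\efont$. For the nontrivial direction, I would exploit the ring structure on $\efont = \efont^+[1/X]$ together with the fact that we already control $\efont^+$ and $X$.

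First I would invoke proposition \ref{etnsh} at $n=0$ to obtain $\efont^+ = (\efont^+)^{\Gamma_k\hyphen\sh,k,0}$, so in particular $\efont^+ \subset \efont^{\Gamma_k\hyphen\sh,k}$. This gives $X \in \efont^{\Gamma_k\hyphen\sh,k,0}$, and since $X$ is a unit in $\efont$ with $\vx(X)=1$, proposition \ref{propmsh}(2) (applied in $M=\efont$, where the $\Gamma_k$-action is still by isometries and the $\vx$-valuation is still sub-multiplicative) yields $1/X \in \efont^{\Gamma_k\hyphen\sh,k,-2}$.

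Next I would observe that $\efont^{\Gamma_k\hyphen\sh,k} = \bigcup_\mu \efont^{\Gamma_k\hyphen\sh,k,\mu}$ is a sub-$\kf$-algebra of $\efont$. Closure under addition is immediate from the definition of $\HH^{\lambda,\mu}$: if $f \in \HH^{\lambda,\mu}$ and $g \in \HH^{\lambda,\nu}$ then $f+g \in \HH^{\lambda,\min(\mu,\nu)}$. Closure under multiplication follows from proposition \ref{propmsh}(1): given $m,m' \in \efont^{\Gamma_k\hyphen\sh,k}$, one replaces the respective $\mu$-parameters by their minimum and picks $v = \min(\vx(m),\vx(m'))$ so that both factors lie in a common $\efont^{\vx \geq v}$, then concludes that $mm' \in \efont^{\Gamma_k\hyphen\sh,k}$.

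Combining these, the sub-$\kf$-algebra $\efont^{\Gamma_k\hyphen\sh,k}$ of $\efont$ contains both $\efont^+$ and $1/X$, hence contains $\efont^+[1/X] = \efont$, which gives the reverse inclusion and finishes the proof. The only real bookkeeping step is tracking how the parameter $\mu$ degrades under products; but since the statement of the corollary only concerns $\efont^{\Gamma_k\hyphen\sh,k}$ (the union over all $\mu$), this degradation is harmless. I do not expect any genuine obstacle: once one notices that we have the right analogue of a ring structure on super-H\"older vectors, the corollary is essentially a formal consequence of proposition \ref{etnsh} and proposition \ref{propmsh}.
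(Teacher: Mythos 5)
Your proof is correct and follows exactly the same route as the paper, whose proof consists only of the citation ``This follows from prop \ref{etnsh} and prop \ref{propmsh}''; you have simply spelled out the details that the paper leaves to the reader (the ring structure on $\efont^{\Gamma_k\hyphen\sh,k}$, getting $1/X$ from proposition \ref{propmsh}(2), and then generating $\efont^+[1/X]$).
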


\begin{proof}
This follows from prop \ref{etnsh} and prop \ref{propmsh}.
\end{proof}

\begin{prop}
\label{levelzer}
If $\eps>0$, then $\kf \dcroc{X}^{\Gamma_k \hyphen \sh,k + \eps} \subset \kf \dcroc{X^p}$. 
\end{prop}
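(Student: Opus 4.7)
The plan is to prove the contrapositive: if $f = \sum_{n \geq 0} a_n X^n \in \kf \dcroc{X}$ has some $a_n \neq 0$ with $p \nmid n$, then $f \notin \kf \dcroc{X}^{\Gamma_k \hyphen \sh, k+\eps}$. So I would assume such indices exist, let $N_0 \geq 1$ be the minimum, and aim to violate the bound of Lemma~\ref{mshalt} by testing against $\gamma_i := 1 + p^{k+i} \in \Gamma_k$ for $i \to \infty$.

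First I would compute the action of $\gamma_i$ on $X$. In characteristic $p$, $(1+X)^{p^{k+i}} = 1 + X^{p^{k+i}}$, so
\[ \gamma_i \cdot X = (1+X)^{1+p^{k+i}} - 1 = X + X^{p^{k+i}}(1+X), \]
and setting $\delta = \gamma_i X - X$, one has $\vx(\delta) = p^{k+i}$. The binomial expansion then gives
\[ \gamma_i f - f = \sum_{n \geq 1} a_n \sum_{j \geq 1} \binom{n}{j} X^{n-j} \delta^j, \]
with binomials taken in $\Fp$.

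Next I would identify the $\vx$-leading contribution. For $n = p^s m$ with $p \nmid m$, Lucas' theorem shows that the smallest $j \geq 1$ with $\binom{n}{j} \neq 0$ in $\Fp$ is $j = p^s$, so the corresponding term sits at $\vx$-order $n - p^s + p^{s+k+i}$. For $n = N_0$ (where $s = 0$), the leading $j = 1$ term is $a_{N_0} \cdot N_0 \cdot X^{N_0 - 1} \delta$ and sits at $\vx$-order exactly $N_0 - 1 + p^{k+i}$, with coefficient $a_{N_0} N_0 \in \kf^\times$ since $p \nmid N_0$. All other contributions --- higher $j$ for $n = N_0$, other indices with $p \nmid n$ (forcing $n > N_0$), or $n = p^s m$ with $s \geq 1$ --- sit at strictly larger $\vx$-order, provided $i$ is large enough that $p^{k+i}(p-1) > N_0 - 1$. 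Hence no cancellation occurs and $\vx(\gamma_i f - f) = N_0 - 1 + p^{k+i}$ exactly for all sufficiently large $i$.

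Finally, Lemma~\ref{mshalt} together with the super-H\"older hypothesis provides some $\mu \in \RR$ such that $\vx(\gamma_i f - f) \geq p^{k+\eps} \cdot p^i + \mu$ for all $i$. Combined with the above computation this yields $p^{k+i}(p^\eps - 1) \leq N_0 - 1 - \mu$, which is absurd for $i$ large since $\eps > 0$. The main obstacle is the combinatorial step of tracking all $\vx$-contributions and confirming the absence of cancellation at the leading order; once that is established, the contradiction between the linear-in-$p^i$ growth obtained from the explicit computation and the $p^\eps$-times-faster growth demanded by super-H\"older is automatic.
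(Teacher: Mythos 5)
Your proof is correct, and the overall strategy matches the paper's: test the orbit map against $\gamma_i = 1 + p^{k+i}$ as $i \to \infty$, pin down the exact valuation $\vx(\gamma_i f - f) = p^{k+i} + (N_0 - 1)$ for $i$ large, and contradict the lower bound $p^{k+\eps}\cdot p^i + \mu$ coming from Lemma~\ref{mshalt}. The difference is in how the leading order is extracted. The paper writes $f(Y+Z) = f(Y) + Z f'(Y) + Z^2 h(Y,Z)$ with $h \in \kf\dcroc{Y,Z}$, so with $\delta = X^{p^{k+i}}(1+X)$ one gets $\gamma_i f - f = \delta f'(X) + \bigO(\delta^2)$; then $f \notin \kf\dcroc{X^p}$ is equivalent to $f' \neq 0$, and $\mu := \vx(f') = N_0 - 1$ gives the leading valuation directly once $2p^{k+i} > p^{k+i} + \mu$. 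Your version does the same computation monomial by monomial: the Lucas-theorem identification of the smallest nonvanishing $\binom{n}{j}$ for each $n$ is, after collecting terms, exactly the statement that $\vx(f') = N_0 - 1$ and that the $j \geq 2$ (respectively $s \geq 1$) contributions fall into the $\bigO(\delta^2)$ remainder. Your argument is more explicit and avoids introducing the formal derivative and the Taylor remainder, at the cost of a more elaborate bookkeeping step to rule out cancellation; both are correct and yield the same contradiction.
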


\begin{proof}
Take $f(X) \in \kf \dcroc{X}$. There is a power series $h(Y,Z) \in \kf \dcroc{Y,Z}$ such that \[ f(Y+Z) = f(Y) + Z \cdot f'(Y) + Z^2 \cdot h(Y,Z). \] If $m \geq 0$,  this implies that
\begin{align*} 
f((1+X)^{1+p^m}-1)  & = f(X+X^{p^m}(1+X)) \\
& = f(X) +  X^{p^m}(1+X) \cdot f'(X) + \bigO(X^{2p^m}). 
\end{align*}
If $f(X) \notin \kf \dcroc{X^p}$, then $f'(X) \neq 0$. Let $\mu=\vx(f'(X))$. The above computations imply that $\vx((1+p^{i+k}) \cdot f(X) - f(X)) = p^{i+k}+\mu$ for $i \gg 0$. This implies the claim.
\end{proof}

\begin{coro}
\label{etshlevel}
We have $(\efont^+_\infty)^{\Gamma_k \hyphen \sh, k-n} = \efont^+_n$.
\end{coro}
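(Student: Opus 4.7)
For the forward inclusion $\efont^+_n \subset (\efont^+_\infty)^{\Gamma_k \hyphen \sh, k-n}$ I would simply invoke proposition \ref{etnsh}: since the $X$-adic valuation and the $\Gamma_k$-action on $\efont^+_\infty$ restrict to those on $\efont^+_n$, the super-H\"older estimate $\vx(g \cdot f - f) \geq p^{k-n} \cdot p^i$ established there for $f \in \efont^+_n$ transports verbatim to the larger space.

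For the reverse inclusion, the plan is to strip one ``level'' at a time from an element whose super-H\"older level appears to exceed $n$, using proposition \ref{levelzer}. Take $f \in (\efont^+_\infty)^{\Gamma_k \hyphen \sh, k-n, \mu}$ and choose the minimal $m \geq 0$ with $f \in \efont^+_m$. I would argue that if $m > n$ then proposition \ref{levelzer} forces $f \in \efont^+_{m-1}$, contradicting minimality. To apply it, I would identify $\efont^+_m$ with $\kf \dcroc{Y}$ via $Y = X^{1/p^m}$: the $\Zp^\times$-action becomes the standard one $a \cdot Y = (1+Y)^a - 1$, while the valuations are related by $v_Y = p^m \vx$ on $\efont^+_m$ (since $\vx(Y) = 1/p^m$ and $v_Y(Y) = 1$). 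Translating the super-H\"older hypothesis into the $v_Y$-valuation then yields, for every $g \in \Gamma_{k+i}$,
\[ v_Y(g \cdot f - f) = p^m \vx(g \cdot f - f) \geq p^{m+k-n} \cdot p^i + p^m \mu, \]
so $f$ lies in $\kf \dcroc{Y}^{\Gamma_k \hyphen \sh, k + (m-n), p^m \mu}$ with respect to $v_Y$.

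Setting $\eps = m - n > 0$, proposition \ref{levelzer} applied verbatim to $\kf \dcroc{Y}$ yields $f \in \kf \dcroc{Y^p} = \kf \dcroc{X^{1/p^{m-1}}} = \efont^+_{m-1}$, contradicting the minimality of $m$; hence $m \leq n$ and $f \in \efont^+_n$. The main obstacle will be the bookkeeping of the change of variable $X \leftrightarrow X^{1/p^m}$, which rescales the valuation by $p^m$ and so shifts the super-H\"older exponent from $k - n$ up to $k + (m-n)$; once this is handled correctly, everything reduces to a routine iteration of proposition \ref{levelzer}.
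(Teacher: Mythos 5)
Your proof is correct and takes essentially the same route as the paper's: the paper rescales the valuation by $p^m$ via the Frobenius $h \mapsto h^{p^m}$, sending $f(X^{1/p^m})$ to $f^{p^m}(X) \in \kf\dcroc{X}$, whereas you use the equivariant change of variable $Y = X^{1/p^m}$, but both devices raise the super-H\"older exponent from $k-n$ to $k+(m-n)$ and then invoke proposition \ref{levelzer} to descend one level at a time. The bookkeeping you flag is exactly right and poses no difficulty.
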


\begin{proof}
Take $f(X^{1/p^m}) \in (\efont^+_\infty)^{\Gamma_k \hyphen \sh, k-n}$
where $f(X) \in \kf \dcroc{X}$. Since $\vx(h^p) = p \cdot \vx(h)$ for all
$h \in \et^+$, we have $f^{p^m}(X) \in (\efont^+_\infty)^{\Gamma_k
\hyphen \sh, k+m-n}$, where $f^{p^m}(X) \in \kf \dcroc{X}$ is 
$f^{p^m}(X) = f(X^{1/p^m})^{p^m}$. If $m \geq n+1$, then prop
\ref{levelzer} implies that $f^{p^m}(X) \in \kf \dcroc{X^p}$, so that
$f(X) = g(X^p)$, and $f(X^{1/p^m}) = g(X^{1/p^{m-1}})$. This implies the
claim.
\end{proof}

\subsection{Tate traces}
\label{sectate}

We recall some constructions of Colmez (see \S 8.2 of \cite{C08}). For $m \geq 0$ let
$I_m = p^{-m}\ZZ\cap [0,1)$, and let $I = \cup_m I_m$. Note that if $i
\in I_m$, then $(1+X)^i \in \efont^+_m$.

\begin{lemm}
The elements $(1+X)^i$, $i\in I_m$, form a basis of $\efont^+_m$ over $\efont^+_0$.
\end{lemm}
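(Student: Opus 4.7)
The plan is to change variables by setting $Y = X^{1/p^m}$, so that $\efont^+_m = \kf \dcroc{Y}$ and $\efont^+_0 = \kf \dcroc{Y^{p^m}}$. Since $(1+X^{1/p^m})^{p^m} = 1 + X$ in characteristic $p$, we have $(1+X)^{j/p^m} = (1+Y)^j$, so the family under consideration is $\{(1+Y)^j : 0 \leq j \leq p^m - 1\}$, and the claim becomes that this is a basis of $\kf \dcroc{Y}$ over $\kf \dcroc{Y^{p^m}}$.

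I would first establish the obvious baseline basis: any element of $\kf \dcroc{Y}$ admits a unique expansion $\sum_{k=0}^{p^m-1} a_k(Y^{p^m}) \cdot Y^k$ obtained by sorting monomials according to the residue of their exponent modulo $p^m$, so $\{1,Y,\ldots,Y^{p^m-1}\}$ is a basis of $\kf \dcroc{Y}$ over $\kf \dcroc{Y^{p^m}}$. It therefore suffices to check that the change of basis from $\{Y^k\}$ to $\{(1+Y)^j\}$ is invertible over $\kf \dcroc{Y^{p^m}}$, or even just over $\kf$.

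The change of basis is easy to analyze: for $0 \leq j \leq p^m-1$,
\[ (1+Y)^j = \sum_{k=0}^{j} \binom{j}{k} Y^k, \]
a polynomial in $Y$ of degree exactly $j$ with leading coefficient $1$. Hence the matrix expressing $\{(1+Y)^j\}_{0 \leq j \leq p^m-1}$ in the basis $\{Y^k\}_{0 \leq k \leq p^m-1}$ is lower triangular with $1$'s on the diagonal. Such a matrix is invertible over $\ZZ$, and therefore over $\kf \dcroc{Y^{p^m}}$, which concludes the proof.

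There is essentially no obstacle: the only thing to be careful about is that one is really working modulo $Y^{p^m}$ when reading off the transition matrix (equivalently, that $\binom{j}{k} = 0$ for $k > j$), so that no higher-degree tail creeps in and the triangular structure is genuine. Everything else is bookkeeping around the substitution $Y = X^{1/p^m}$.
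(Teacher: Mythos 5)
Your proof is correct. The paper itself does not give an argument; it simply cites Lemma 8.2 of Colmez's \emph{Espaces vectoriels de dimension finie et repr\'esentations de de Rham}, remarking only that the proof there (stated for $\kf = \Fp$) goes through for arbitrary coefficients. You have instead supplied a short self-contained argument: the substitution $Y = X^{1/p^m}$ reduces the claim to showing that $\{(1+Y)^j\}_{0 \leq j \leq p^m-1}$ is an $\kf\dcroc{Y^{p^m}}$-basis of $\kf\dcroc{Y}$, and this follows because the transition matrix from the evident basis $\{Y^k\}_{0 \leq k \leq p^m-1}$ is a unipotent triangular matrix over $\Fp$, using crucially that each $(1+Y)^j$ with $j < p^m$ has degree $< p^m$ so that the transition coefficients are genuinely constants rather than elements of $\kf\dcroc{Y^{p^m}}$ of positive degree. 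This is exactly the kind of bookkeeping the paper delegates to the reference, and having it spelled out is a net gain in readability; it also makes the ``arbitrary coefficients'' remark transparent, since the transition matrix lives over $\Fp$ regardless of $\kf$.
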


\begin{proof}
See lemma 8.2 of \cite{C08}. Colmez works with $\kf = \Fp$, but the proofs are the same with arbitrary coefficients.
\end{proof}

\begin{prop}
\label{colmet+}
Any $f \in \et^+$ can be written uniquely as $\sum_{i\in
I}(1+X)^ia_i(f)$, with $a_i(f) \in \efont^+_0$, and $a_i(f) \to 0$.
Moreover, $\vx(f)-1 < \inf_{i\in I} \vx(a_i(f)) \leq \vx(f)$. 
\end{prop}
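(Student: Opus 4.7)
The plan is to establish the statement first at finite level $\efont^+_m$, where it reduces to a change of basis between $\{(1+X)^i\}_{i \in I_m}$ and the monomials $\{X^{\ell/p^m}\}_{0 \leq \ell < p^m}$, and then to propagate it to $\et^+$ by completeness, with the strict valuation bound serving as the key quantitative tool.

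For $f \in \efont^+_m$, the preceding lemma gives a unique expression $f = \sum_{i \in I_m} (1+X)^i a_i(f)$ with $a_i(f) \in \efont^+_0$; I extend $a_i(f)$ by zero on $I \setminus I_m$. The bound $\inf_i \vx(a_i(f)) \leq \vx(f)$ is immediate from $\vx((1+X)^i) = 0$. For the strict lower bound $\vx(f) - 1 < \inf_i \vx(a_i(f))$, I also expand $f = \sum_{\ell = 0}^{p^m - 1} X^{\ell/p^m} f_\ell$ with $f_\ell \in \efont^+_0$. Distinct $\ell$'s contribute monomials with distinct exponents $(\ell + jp^m)/p^m$, so $\vx(f) = \min_\ell(\ell/p^m + \vx(f_\ell))$, whence $\vx(f_\ell) \geq \vx(f) - \ell/p^m > \vx(f) - 1$ because $\ell < p^m$. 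The transition matrix $\bigl(\binom{j}{\ell}\bigr)_{0 \leq j,\ell < p^m}$ between the two bases is unipotent over $\ZZ$, so its inverse has integer entries, yielding $\vx(a_j(f)) \geq \min_\ell \vx(f_\ell) > \vx(f) - 1$.

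For general $f \in \et^+$, I choose $f_k \in \efont^+_\infty$ with $\vx(f - f_k) \to +\infty$, say $f_k \in \efont^+_{m_k}$. Since the finite-level decomposition is $\kf$-linear, the bound applied to $f_k - f_j$ shows that $\{a_i(f_k)\}_k$ is Cauchy in the complete ring $\efont^+_0$, so $a_i(f) := \lim_k a_i(f_k)$ is well-defined. The same bound, combined with $a_i(f_k) = 0$ for $i \notin I_{m_k}$, yields $\vx(a_i(f)) \geq \vx(f - f_k) - 1$ for $i$ outside the finite set $I_{m_k}$, forcing $\vx(a_i(f)) \to +\infty$ over $I$; then $\sum_i (1+X)^i a_i(f) = \lim_k f_k = f$ by continuity. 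Uniqueness follows the same way: if $0 = \sum_i (1+X)^i b_i$ with $b_i \to 0$, then for each $N$ the finite set $S_N = \{i : \vx(b_i) \leq N\}$ lies in some $I_m$, the partial sum $\sum_{i \in S_N} (1+X)^i b_i \in \efont^+_m$ has $\vx > N$, and the finite-level bound gives $\vx(b_i) > N - 1$ for $i \in S_N$; letting $N \to +\infty$ forces all $b_i = 0$.

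The subtlety I anticipate is preserving the \emph{strict} inequality in the valuation estimate when passing to the limit, since a naive limit of strict inequalities is only non-strict. I would handle this by ultrametric rigidity: assuming $f \neq 0$, the infimum over $I$ is attained at some index $i_0$, and for $k$ large both $\vx(a_{i_0}(f_k)) = \vx(a_{i_0}(f))$ and $\vx(f_k) = \vx(f)$; then the level-$m_k$ inequality $\vx(f_k) - 1 < \inf_i \vx(a_i(f_k)) \leq \vx(a_{i_0}(f_k))$ directly gives $\vx(f) - 1 < \vx(a_{i_0}(f)) = \inf_i \vx(a_i(f))$.
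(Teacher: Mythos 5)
Your proof is correct and supplies the details that the paper outsources to Colmez (props 4.10 and 8.3 of \cite{C08}). The strategy --- the unipotent change of basis between $\{(1+X)^i\}_{i\in I_m}$ and the monomials $\{X^{\ell/p^m}\}_{0\le \ell<p^m}$ at finite level to get the strict valuation bound, then completeness of $\efont^+_0$ plus ultrametric rigidity (the infimum is attained since $a_i(f)\to 0$, and valuations of nonzero limits stabilize) to pass to $\et^+$ while preserving strictness --- is the natural one and matches Colmez's approach.
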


\begin{proof}
See props 4.10 and 8.3 of \cite{C08}.
\end{proof}

In particular, for all $i \in I$, the map $\et^+ \to \efont^+_0$, given by $f \mapsto a_i(f)$ is continuous.

\begin{prop}
\label{colmtn}
There exists a family $\{ T_n \}_{n \geq 0}$ of continuous maps $T_n : \et^+ \to \efont^+_n$ satisfying the following properties:
\begin{enumerate}
\item The restriction of $T_n$ to $\efont^+_n$ is the identity map.
\item We have $T_n(f) \to f$ as $n \to +\infty$.
\item We have $\vx(T_n(f)) \geq \vx(f)-1$ for all $n$.
\item Each $T_n$ is $\Zp^\times$-equivariant.
\end{enumerate}
\end{prop}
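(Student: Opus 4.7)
The plan is to define $T_n$ as the $\efont^+_0$-linear projection onto the span of $\{(1+X)^i : i \in I_n\}$ provided by Prop \ref{colmet+}. Explicitly, for $f \in \et^+$ with decomposition $f = \sum_{i \in I}(1+X)^i a_i(f)$, I propose to set
$$T_n(f) := \sum_{i \in I_n}(1+X)^i a_i(f),$$
which lies in $\efont^+_n$ by the lemma above, and is continuous since each coordinate map $f \mapsto a_i(f)$ is continuous (by the remark following Prop \ref{colmet+}).

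Properties (1)--(3) should then follow almost formally from Prop \ref{colmet+}. For (1), any $g \in \efont^+_n$ has a unique expression $\sum_{i \in I_n}(1+X)^i a_i$ with $a_i \in \efont^+_0$ by the lemma, and by uniqueness this must coincide with its decomposition in $\et^+$, so $T_n(g)=g$. For (3), the bound reads
$$\vx(T_n(f)) \geq \inf_{i \in I_n}\vx(a_i(f)) \geq \inf_{i \in I}\vx(a_i(f)) > \vx(f)-1.$$
For (2), from $a_i(f) \to 0$ one gets that for any $N$ only finitely many $i$ satisfy $\vx(a_i(f))<N$; this finite set lies in some $I_{n_0}$, and for $n \geq n_0$ one obtains $\vx(f-T_n(f)) \geq \inf_{i \notin I_n}\vx(a_i(f)) \geq N$.

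The main obstacle is the equivariance in (4). The key point to verify is that $a \in \Zp^\times$ sends $(1+X)^i$ to an $\efont^+_0$-multiple of $(1+X)^{i'}$ for some $i' \in I$ of the same level as $i$. Fixing $i = j/p^m \in I_m$, I would expand $a \cdot (1+X)^i = (1+X^{1/p^m})^{aj}$ and write $aj = b + p^m c$ in $\Zp$ with $b \in \{0,\dots,p^m-1\}$; this yields
$$a \cdot (1+X)^i = (1+X)^{b/p^m} \cdot (1+X)^c,$$
with $(1+X)^c \in \efont^+_0$ since $c \in \Zp$. Because $a$ is a $p$-adic unit, $\vp(b)=\vp(j)$, so $i \mapsto b/p^m$ is a bijection of $I_m \setminus I_{m-1}$ for each $m \geq 1$, and in particular preserves both $I_n$ and $I \setminus I_n$. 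Expanding $a \cdot f = \sum_i (a \cdot (1+X)^i)(a \cdot a_i(f))$ and re-indexing along this bijection, the coefficients $a_{i'}(a \cdot f)$ for $i' \in I_n$ depend only on the $a_i(f)$ with $i \in I_n$, which yields $T_n(a \cdot f) = a \cdot T_n(f)$.
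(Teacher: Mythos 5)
Your proof is correct and follows essentially the same route as the paper's: define $T_n$ by truncating Colmez's decomposition to $I_n$, deduce (1)--(3) from Prop \ref{colmet+}, and obtain (4) by showing that the action of $\Zp^\times$ sends $(1+X)^i$ to $(1+X)^{i'}$ times a unit of $\efont^+_0$ with $i \mapsto i'$ a bijection of each $I_m$. Your verification of that bijection (via $aj = b + p^m c$ and the fact that $a$ is a $p$-adic unit) is in fact slightly more detailed than the paper's, which simply asserts it.
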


\begin{proof}
If $f = \sum_{i\in I}(1+X)^ia_i(f)$, let $T_n(f) = \sum_{i \in I_n}(1+X)^ia_i(f)$.
With this definition, the first property is immediate. The second and third
one follow from prop \ref{colmet+}.

For the last one, observe that if $i \in I$ and $g \in \Zp^\times$, then $g
\cdot (1+X)^i =(1+X)^{ig}$ so $g \cdot (1+X)^i$ can be
written uniquely as $(1+X)^{\sigma_g(i)}u_{i,g}(X)$ with $\sigma_g(i) \in
I$ and $u_{i,g}(X) \in \efont^+_0$. The map $\sigma_g$ induces a bijection
from $I_m$ to itself for all $m$. Take $f \in \et^+$, and write $f = \sum_{i\in I}(1+X)^ia_i(f)$. We have
$g \cdot f = \sum_{i\in I}(1+X)^{\sigma_g(i)}u_{i,g}(X)(g\cdot a_i(f))$,
so that $T_n(g \cdot f) = \sum_{i\in I_n}(1+X)^{\sigma_g(i)}u_{i,g}(X)(g\cdot a_i(f)) = g \cdot T_n(f)$.
\end{proof}

\subsection{Decompletion of $\et$}
\label{subdec}

We now prove that $\et^{\sh} = \efont_\infty$. More precisely, we have the following result.

\begin{theo}
\label{shdecet}
We have $\et^{\Gamma_k\hyphen\sh, k-m} = \efont_m$ for all $m \geq 0$, and $\et^{\sh} = \efont_\infty$.
\end{theo}

\begin{prop}
\label{tnsh}
If $f \in (\et^+)^{\Gamma_k\hyphen \sh, \lambda,\mu}$, then $T_n(f) \in  (\efont^+_n)^{\Gamma_k \hyphen \sh, \lambda,\mu-1}$.
\end{prop}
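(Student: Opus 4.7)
The plan is to combine two properties of $T_n$ that were already established in Proposition \ref{colmtn}: its $\Zp^\times$-equivariance (and hence $\Gamma_k$-equivariance) and the estimate $\vx(T_n(h)) \geq \vx(h) - 1$. Together with the characterization of super-Hölder vectors in Lemma \ref{mshalt}, these give the result with essentially no extra work.

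More concretely, I would first reduce to checking the super-Hölder condition in the form of Lemma \ref{mshalt}, namely that for every $g \in \Gamma_k$ with $c(g) \in p^i \Zp$, one has $\vx(g \cdot T_n(f) - T_n(f)) \geq p^\lambda \cdot p^i + (\mu - 1)$. Next I would use $\Zp^\times$-equivariance of $T_n$ (property (4) of Proposition \ref{colmtn}) together with $\kf$-linearity to write
\[ g \cdot T_n(f) - T_n(f) = T_n(g \cdot f) - T_n(f) = T_n(g \cdot f - f). \]

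Finally, applying the norm bound (property (3) of Proposition \ref{colmtn}) to $h = g \cdot f - f$, and using the hypothesis that $f \in (\et^+)^{\Gamma_k \hyphen \sh, \lambda, \mu}$, I get
\[ \vx(T_n(g \cdot f - f)) \geq \vx(g \cdot f - f) - 1 \geq p^\lambda \cdot p^i + \mu - 1, \]
which is exactly what is needed. Since $T_n$ takes values in $\efont^+_n$, we conclude $T_n(f) \in (\efont^+_n)^{\Gamma_k \hyphen \sh, \lambda, \mu - 1}$.

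There is no real obstacle here; the proposition is a formal consequence of the equivariance and the contraction property of $T_n$. The loss of $1$ in the constant $\mu$ is exactly the cost of one application of the bound $\vx(T_n(h)) \geq \vx(h) - 1$, and the constant $\lambda$ governing the rate of decay is preserved because $T_n$ is applied only once.
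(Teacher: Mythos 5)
Your proof is correct and follows exactly the paper's argument: use $\Zp^\times$-equivariance and $\kf$-linearity of $T_n$ to write $g\cdot T_n(f)-T_n(f)=T_n(g\cdot f-f)$, then apply the bound $\vx(T_n(h))\geq\vx(h)-1$. Nothing to add.
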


\begin{proof}
If $g \in \Gamma_k$, then $g(T_n(f)) - T_n(f) = T_n(g(f)-f)$ so that \[ \vx(g(T_n(f)) - T_n(f)) = \vx (T_n(g(f)-f)) \geq \vx(g(f)-f)-1 \]
by prop \ref{colmtn}. This implies the claim.
\end{proof}

\begin{proof}[Proof of theorem \ref{shdecet}]
Take $f \in (\et^+)^{\Gamma_k \hyphen \sh, k-m}$. By prop \ref{tnsh}, we have $T_n(f) \in  (\efont^+_n)^{\Gamma_k \hyphen \sh, k-m}$ for all $n \geq 0$. By coro \ref{etshlevel}, $T_n(f) \in \efont^+_m$ for all $n$. Since $T_n(f) \to f$ as $n \to +\infty$, we have $f \in \efont^+_m$.  

Hence $(\et^+)^{\Gamma_k\hyphen\sh, k-m} = \efont^+_m$, and this implies the theorem by prop \ref{propmsh}.
\end{proof}

\section{Applications}
\label{secappli}

We now give several applications of the fact that $\et^{\sh} = \efont_\infty$.

\subsection{The perfectoid commutant of $\Aut(\Gm)$}
\label{secperfcom}

In this section, we assume that $\kf=\Fp$. If $a \in \Zp^\times$, let $\gamma_a(X) = (1+X)^a-1 \in \Fp \dcroc{X}$. Note that if $f \in \et$, then $a \cdot f = f \circ \gamma_a$.  If $u \in \et^+$ is such that $\vx(u)>0$, the series $\gamma_a \circ u$ converges in $\et^+$. If $u = \gamma_b(X^{p^n})$ for some $b \in \Zp^\times$ and $n \in \ZZ$, then $u \circ  \gamma_a = \gamma_a \circ u$ for all $a \in \Zp^\times$.

\begin{theo}
\label{gmcom}
If $u \in \et^+$ is such that $\vx(u) > 0$ and $u \circ  \gamma_a = \gamma_a \circ u$ for all $a \in \Zp^\times$, then there exists $b \in \Zp^\times$ and $n \in \ZZ$ such that $u(X) = \gamma_b(X^{p^n})$.
\end{theo}

Recall that a power series $f(X) \in \Fp\dcroc{X}$ is separable if $f'(X)
\neq 0$. If $f(X) \in X \cdot \Fp\dcroc{X}$, we say that $f$ is
invertible if $f'(0) \in \Fp^\times$, which is equivalent to $f$ being
invertible for composition (denoted by $\circ$). We say that $w(X) \in X
\cdot \Fp\dcroc{X}$ is nontorsion if $w^{\circ n}(X) \neq X$ for all $n
\geq 1$. The following is a reformulation of lemma 6.2 of \cite{L94}.

\begin{lemm}
\label{lubnarch}
Let $w(X) \in X + X^2\cdot \Fp\dcroc{X}$ be an invertible nontorsion series, and let $f(X) \in X \cdot \Fp\dcroc{X}$ be a separable power series. If $w \circ f = f \circ w$, then $f$ is invertible.
\end{lemm}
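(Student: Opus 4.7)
The plan is to argue by contradiction, assuming $f$ is not invertible. Since $f(0) = 0$, this means $f'(0) = 0$, so I can write $f(X) = \sum_{i \geq m} c_i X^i$ with $m := \vx(f) \geq 2$ and $c_m \neq 0$. Separability of $f$ gives $f'(X) \neq 0$, so there is a smallest integer $j$ with $c_j \neq 0$ and $p \nmid j$; one then has $j \geq m \geq 2$ and $\vx(f'(X)) = j - 1$, with nonzero leading coefficient $j c_j \in \Fp^\times$.

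My first move will be to replace $w$ by a suitable iterate in order to boost its depth $\vx(w - X)$. The commutation $w \circ f = f \circ w$ propagates by induction to $w^{\circ N} \circ f = f \circ w^{\circ N}$ for every $N \geq 1$, and Sen's theorem (as already used in remark \ref{notting}) gives $\vx(w^{\circ p^n}(X) - X) \geq p^n$; since $w$ is nontorsion, this iterate differs from $X$, so the valuation is finite. Choosing $n$ with $p^n \geq j$ and replacing $w$ by $w^{\circ p^n}$, I may assume that $\phi(X) := w(X) - X = \alpha X^k + \cdots$ satisfies $k := \vx(\phi) \geq j$ and $\alpha \in \Fp^\times$.

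I then rewrite the commutation $w(f(X)) = f(w(X))$ as the identity
\[ \phi(f(X)) = f(X + \phi(X)) - f(X) = \sum_{n \geq 1} H_n f(X) \cdot \phi(X)^n, \]
where $H_n f$ denotes the $n$-th Hasse derivative of $f$, the characteristic-$p$ substitute for Taylor expansion, characterized by $H_n(X^i) = \binom{i}{n} X^{i - n}$. The left-hand side equals $\alpha f(X)^k + \cdots$, hence has valuation exactly $mk$ with nonzero leading coefficient $\alpha c_m^k \in \Fp^\times$. On the right, the $n = 1$ term equals $f'(X)\phi(X) = j c_j \alpha\, X^{k + j - 1} + \cdots$, of exact valuation $k + j - 1$; for $n \geq 2$ the estimate $\vx(H_n f) \geq \max(m - n, 0)$ yields $\vx(H_n f \cdot \phi(X)^n) \geq nk \geq 2k$, and $2k > k + j - 1$ since $k \geq j$.

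Equating the two valuations forces $mk = k + j - 1$, i.e.\ $j = (m-1)k + 1 \geq k + 1$, contradicting $j \leq k$. Hence $f'(0) \neq 0$ and $f$ is invertible. The only nontrivial external input is Sen's theorem, which lets me push $w$ into the regime $k \geq j$ where the valuation comparison becomes decisive; I expect the main bookkeeping hurdle to be the valuation estimates on the $n \geq 2$ terms of the Hasse-derivative expansion, needed to ensure that the $n = 1$ term really dominates on the right-hand side.
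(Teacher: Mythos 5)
Your argument is correct. The paper itself gives no proof of this lemma: it simply states that it is a reformulation of lemma 6.2 of \cite{L94} and leaves the argument to Lubin. What you have written is a sound, self-contained replacement, and its two ingredients are both legitimate: the reduction, via Sen's theorem exactly as invoked in remark \ref{notting} (together with the nontorsion hypothesis, which guarantees $w^{\circ p^n}(X)\neq X$ so that $k=\vx(w^{\circ p^n}(X)-X)$ is finite), to the regime $k\geq j$; and the exact-valuation comparison in the identity $\phi(f(X))=\sum_{n\geq 1}H_nf(X)\,\phi(X)^n$. The left side has valuation exactly $mk$ with unit leading coefficient $\alpha c_m^k$, the $n=1$ term has valuation exactly $k+j-1$ with unit leading coefficient $jc_j\alpha$ (this is where separability enters, through $f'\neq 0$), and the crude bound $\vx(H_nf\cdot\phi^n)\geq nk\geq 2k>k+j-1$ for $n\geq 2$ shows the $n=1$ term dominates; the resulting equality $mk=k+j-1$ is indeed incompatible with $m\geq 2$ and $k\geq j$, since then $mk\geq 2k\geq k+j>k+j-1$. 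This is close in spirit to Lubin's own method of comparing leading terms on the two sides of a commutation relation, but your use of Sen's theorem to first deepen $w$ is what makes the comparison immediately decisive and keeps the proof short; the price is that you rely on Sen's ramification-theoretic result, whereas the statement itself is elementary. Either way, the proof stands.
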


\begin{lemm}
\label{locansh}
If $u \in \et^+$ is such that $\vx(u) > 0$ and $u \circ  \gamma_a = \gamma_a \circ u$ for all $a \in \Zp^\times$, then $u \in (\et^+)^{\sh}$.
\end{lemm}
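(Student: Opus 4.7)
The plan is to exploit the commutation hypothesis to rewrite the $\Gamma_k$-action on $u$ as an explicit power-series expression, and then use that we are in characteristic $p$ to read off valuation estimates from the Frobenius. Recall that the action of $a \in \Zp^\times$ on $\et^+$ sends $u$ to $u \circ \gamma_a$; the hypothesis $u \circ \gamma_a = \gamma_a \circ u$ therefore gives $a \cdot u = \gamma_a(u) = (1+u)^a - 1$, and so
\[ a \cdot u - u = (1+u)^a - (1+u). \]

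The main computation is then to bound this expression when $a \in \Gamma_{k+i}$. Writing $a = 1 + p^{k+i} b$ with $b \in \Zp$ and using $(1+u)^{p^{k+i}} = 1 + u^{p^{k+i}}$ in characteristic $p$, I would factor
\[ (1+u)^a - (1+u) = (1+u) \cdot \bigl[(1 + u^{p^{k+i}})^b - 1\bigr]. \]
Since $(1+Y)^b - 1 \in Y \cdot \Zp\dcroc{Y}$, this gives $\vx\bigl((1+u^{p^{k+i}})^b - 1\bigr) \geq p^{k+i} \vx(u)$, and combined with $\vx(1+u) = 0$ (which uses $\vx(u) > 0$), we conclude $\vx(a \cdot u - u) \geq p^{k+i} \vx(u)$.

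To convert this estimate into membership in $(\et^+)^{\sh}$, I would appeal to Lemma \ref{mshalt}: with the coordinate $c(1 + p^k a) = a$ on $\Gamma_k$, the condition $c(g) \in p^i \Zp$ is precisely $g \in 1 + p^{k+i}\Zp$. Choosing $\lambda \in \RR$ so that $p^\lambda = p^k \vx(u)$ and $\mu = 0$, the estimate above becomes $\vx(a \cdot u - u) \geq p^\lambda \cdot p^i$ for all such $a$, showing that $u \in (\et^+)^{\Gamma_k \hyphen \sh, \lambda, 0} \subset (\et^+)^{\sh}$.

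There is no real obstacle in this argument: the proof reduces to a one-line characteristic-$p$ computation once one notices that the commutation relation converts precomposition by $\gamma_a$ into postcomposition, which for $a \in \Gamma_{k+i}$ becomes a perfect $p^{k+i}$-th power in the exponent. The only bookkeeping is to match the growth $p^{k+i}\vx(u)$ with the shape $p^\lambda \cdot p^i + \mu$ of the super-H\"older condition, which is straightforward since $\vx(u)$ is a fixed positive real number.
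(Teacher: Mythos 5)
Your proof is correct and takes essentially the same route as the paper's: the key step in both is to use the commutation hypothesis to rewrite $a\cdot u = \gamma_a(u) = (1+u)^a - 1$ and then read the super-H\"older estimate off from $\vx(u)>0$. The paper phrases this tersely via the Mahler expansion $\gamma_a(u)=\sum_{n\geq 1}\binom{a}{n}u^n$ (implicitly invoking Prop.~\ref{shmahl}), whereas you spell out the binomial factorization $(1+u)^{1+p^{k+i}b}-(1+u)=(1+u)\bigl[(1+u^{p^{k+i}})^b-1\bigr]$ directly, exactly mirroring the example $[a\mapsto(1+X)^a]$ from \S\ref{subshf}; this is a valid unpacking of the same idea.
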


\begin{proof}
The group $\Zp^\times$ acts on $\et^+$ by $a \cdot u = u \circ \gamma_a$, so we need to check that the function $a \mapsto \gamma_a \circ u$ is super-H\"older. This is clear since $\gamma_a(u) = \sum_{n \geq 1} \binom{a}{n} u^n$ and $\vx(u)>0$.
\end{proof}

\begin{proof}[Proof of theorem \ref{gmcom}] 
Take $u \in \et^+$ such that $\vx(u) > 0$ and $u \circ  \gamma_a = \gamma_a \circ u$ for all $a \in \Zp^\times$. By lemma \ref{locansh} and theorem \ref{shdecet}, there exists $m \geq 0$ such that $u \in \efont^+_m$. Hence there is an $n \in \ZZ$ such that $f(X) = u(X^{1/p^n})$ belongs to $X \cdot \Fp\dcroc{X}$ and is separable. Take $g \in 1+p \Zp$ such that $g$ is nontorsion, and let $w(X) = \gamma_g(X)$ so that $u \circ w = w \circ u$. We also have $f \circ w = w \circ f$. By lemma \ref{lubnarch}, $f$ is invertible. Since $f \circ  \gamma_a = \gamma_a \circ f$ for all $a \in \Zp^\times$, theorem 6 of \cite{LS07} implies that $f \in \Aut(\Gm)$. Hence there exists $b \in \Zp^\times$ such that $f(X)=\gamma_b(X)$. This implies the theorem.
\end{proof}

\subsection{Decompletion of $(\phi,\Gamma)$-modules}
\label{secpgm}

Let $\Gamma_k = 1+p^k \Zp$ with $k \geq 1$, as in \S\ref{secgam}. Let $M$ be a
finite-dimensional $\efont$-vector space with a continuous semi-linear action of
$\Gamma_k$.

\begin{prop}
\label{gamlat}
There is an $\efont^+$-lattice in $M$ that is stable under $\Gamma_k$.
\end{prop}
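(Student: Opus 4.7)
The plan is to start from any $\efont^+$-lattice $L_0$ of $M$, find an open subgroup $\Gamma' \subset \Gamma_k$ that stabilizes $L_0$, and then average $L_0$ over the finite quotient $\Gamma_k/\Gamma'$ to produce a $\Gamma_k$-stable lattice.

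First I would fix an $\efont$-basis $e_1,\dots,e_d$ of $M$ and set $L_0 = \sum_i \efont^+ e_i$. The continuity of the $\Gamma_k$-action on $M$ implies that each orbit map $g \mapsto g \cdot e_i$ is continuous at $1$, so there is an open subgroup $\Gamma'_i \subset \Gamma_k$ with $g \cdot e_i - e_i \in X L_0$ for all $g \in \Gamma'_i$. Setting $\Gamma' = \bigcap_{i=1}^d \Gamma'_i$, this is still an open subgroup of $\Gamma_k$, and it satisfies $g \cdot e_i \in L_0$ for all $i$ and all $g \in \Gamma'$. Since $\Gamma_k \cong \Zp$, every open subgroup has finite index, so $r := [\Gamma_k : \Gamma'] < \infty$. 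Moreover $\Gamma_k$ preserves $\efont^+$ inside $\efont$, because $a \cdot X = (1+X)^a - 1 \in X \efont^+$ for $a \in 1+p^k \Zp$; combined with $g \cdot e_i \in L_0$, the semi-linearity of the action shows that $\Gamma'$ stabilizes $L_0$ as an $\efont^+$-submodule of $M$.

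Next, choose coset representatives $g_1,\dots,g_r$ of $\Gamma_k/\Gamma'$ and define $L = \sum_{j=1}^r g_j(L_0)$. Each $g_j(L_0)$ is itself an $\efont^+$-lattice in $M$, namely the $\efont^+$-span of the basis $g_j(e_1),\dots,g_j(e_d)$, and a finite sum of lattices inside a finite-dimensional $\efont$-vector space is still a lattice since $\efont^+$ is a discrete valuation ring. To verify $\Gamma_k$-stability, note that any $g \in \Gamma_k$ permutes the cosets: writing $g \cdot g_j = g_{\sigma(j)} h_j$ with $h_j \in \Gamma'$, one gets $g(g_j(L_0)) = g_{\sigma(j)}(h_j(L_0)) = g_{\sigma(j)}(L_0)$, so $g(L) = L$.

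The only point requiring care is the extraction of continuity of each orbit map from the continuity of the action $\Gamma_k \times M \to M$, which is immediate from joint continuity and the openness of $X L_0$ in $M$. No serious obstacle is expected, as the argument is the standard averaging trick made purely algebraic by the fact that $\Gamma_k \cong \Zp$ has only open subgroups of finite index.
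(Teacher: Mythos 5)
Your proof is correct and uses the same underlying idea as the paper: use continuity to control the action on a neighborhood of the identity, then average over the compact group to produce a stable lattice. The only cosmetic difference is that you first locate an open subgroup $\Gamma'$ that genuinely \emph{stabilizes} $L_0$ and then sum over the finite quotient $\Gamma_k/\Gamma'$, whereas the paper uses joint continuity directly to bound $g(M_0^+) \subset X^{-d}M_0^+$ for all $g \in \Gamma_k$ and then takes $M^+ = \sum_{g \in \Gamma_k} g(M_0^+)$ without isolating a stabilizing subgroup; both yield a lattice sandwiched between $L_0$ and $X^{-d}L_0$.
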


\begin{proof}
Choose any lattice $M_0^+$ of $M$. The map $\pi : \Gamma_k \times M \to M$ is continuous, so there is an open subgroup $H$ of $\Gamma_k$ and an $n \geq 0$ such that $\pi^{-1}(M_0^+)$ contains $H \times X^n M_0^+$. In particular, $h(m) \in X^{-n} M_0^+$ for all $h \in H$ and $m \in M_0^+$. Since $H$ is open in the compact group $\Gamma_k$, it is of finite index, and there exists $d \geq n$ such that $g(m) \subset X^{-d} M_0^+$ for all $g \in \Gamma_k$ and $m \in M_0^+$. The space $M^+ = \sum_{g \in \Gamma_k} g(M_0^+)$ is an $\efont^+$-module such that $M_0^+ \subset M^+ \subset X^{-d} M_0^+$, so that $M^+$ is a lattice of $M$. It is clearly stable under $\Gamma_k$.
\end{proof}

Choosing such an $\efont^+$-lattice in $M$
defines a valuation $\vm$ on $M$, such that $\Gamma_k$ acts on $M$ by isometries.
We make such a choice, and we can therefore define
$M^{\sh}$ and $M^{\Gamma_k\hyphen\sh,\lambda}$ as in definition \ref{defshvec}. We say
that the action of $\Gamma_k$ on $M$ is super-H\"older if $M=M^{\sh}$.

\begin{lemm}
\label{mshsub}
The space $M^{\Gamma_k\hyphen\sh,\lambda}$ does not depend on the choice of $\Gamma_k$-stable lattice of $M$.  If $\lambda \leq k$ then $M^{\Gamma_k\hyphen\sh,\lambda}$ is sub-$\efont$-vector space
of $M$.
\end{lemm}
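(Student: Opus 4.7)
The plan is to handle the two assertions separately. For lattice-independence, I would exploit the fact that any two $\efont^+$-lattices $M^+$ and $N^+$ inside the finite-dimensional $\efont$-vector space $M$ are commensurable: since $\efont^+ = \kf\dcroc{X}$ is a discrete valuation ring and both lattices are free of rank $\dim_\efont M$, there is some $c \geq 0$ with $X^c M^+ \subset N^+$ and $X^c N^+ \subset M^+$. The induced valuations $\vm$ and $\vm'$ then satisfy $|\vm(x) - \vm'(x)| \leq c$ for every $x \in M$, so a super-H\"older bound $\vm(g \cdot m - m) \geq p^\lambda \cdot p^i + \mu$ translates to $\vm'(g \cdot m - m) \geq p^\lambda \cdot p^i + (\mu - c)$. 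The constant $\mu$ shifts but the exponent $\lambda$ does not, so taking the union over $\mu$ gives the independence of $M^{\Gamma_k \hyphen \sh, \lambda}$.

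For the second assertion, the definition already makes $M^{\Gamma_k \hyphen \sh, \lambda}$ a sub-$\kf$-vector space, so the content is closure under multiplication by elements $f \in \efont$. Given $m \in M^{\Gamma_k \hyphen \sh, \lambda, \mu}$ and $g \in \Gamma_k$ with $c(g) \in p^i \Zp$, I would apply the semi-linear Leibniz expansion
\[ g(fm) - fm = g(f) \cdot \bigl(g(m) - m\bigr) + \bigl(g(f) - f\bigr) \cdot m \]
and estimate the two summands in turn. By corollary \ref{egsk} we have $f \in \efont^{\Gamma_k \hyphen \sh, k}$, so there exists $\mu_f \in \RR$ with $\vx(g(f) - f) \geq p^k \cdot p^i + \mu_f$; since $\Gamma_k$ acts by isometries on $\efont$, also $\vx(g(f)) = \vx(f)$. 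The first summand then satisfies $\vm(g(f)(g(m)-m)) \geq \vx(f) + p^\lambda \cdot p^i + \mu$, and the second satisfies $\vm((g(f)-f)\, m) \geq p^k \cdot p^i + \mu_f + \vm(m)$.

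The hypothesis $\lambda \leq k$ enters at exactly one place: it ensures $p^k \cdot p^i \geq p^\lambda \cdot p^i$, so that the bound on the second summand is at least $p^\lambda \cdot p^i + (\mu_f + \vm(m))$, matching the required growth rate. Combining, $fm \in M^{\Gamma_k \hyphen \sh, \lambda, \mu'}$ for $\mu' = \min(\vx(f) + \mu,\, \mu_f + \vm(m))$, which gives closure under the $\efont$-action. The main conceptual point, rather than an obstacle, is recognising that corollary \ref{egsk} pins down the super-H\"older level of $\Gamma_k$ acting on $\efont$ to be exactly $k$: this is precisely what makes $\lambda \leq k$ the sharp condition, since for $\lambda > k$ the inhomogeneous term $(g(f)-f)\, m$ would decay too slowly to be absorbed into the super-H\"older estimate at level $\lambda$.
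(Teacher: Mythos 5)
Your proof is correct and follows essentially the same route as the paper: commensurability of lattices (hence valuations differing by a bounded constant) for the first claim, and the decomposition $g(fm)-fm = g(f)(g(m)-m) + (g(f)-f)m$ together with corollary \ref{egsk} and the hypothesis $\lambda \leq k$ for the second. Your write-up simply makes the estimates explicit where the paper leaves them implicit.
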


\begin{proof}
The first assertion results from the fact that if we choose two
$\efont^+$-lattices $M_1^+$ and $M_2^+$ in $M$, then there exists a
constant $C$ such that $|{\val_1-\val_2} | \leq C$.

Next, recall that by coro \ref{egsk}, $\efont =
\efont^{\Gamma_k\hyphen\sh,k}$. If $m \in M^{\sh,\lambda}$,  $f \in
\efont$, and $g \in \Gamma_k$, then $g(fm)-fm = g(f)(g(m)-m) + (g(f)-f)m$,
so that $fm \in M^{\sh,\lambda}$ by lemma \ref{mshalt}.
\end{proof}

Lemma \ref{mshsub} implies that $M^{\sh}$ is a sub-$\efont$-vector space of $M$.  We say that a basis of $M$ is good if it generates a lattice that is stable under $\Gamma_k$.

\begin{prop}
\label{mshbas}
Take $\lambda \leq k$ and fix a good basis of $M$. We have $M=M^{\Gamma_k\hyphen\sh,\lambda}$ if and only if the
map $\Gamma_k \to \M_n(\efont^+)$, given by $g \mapsto \Mat(g)$, is in $\HH^\lambda(\Gamma_k,\M_n(\efont^+))$. 
\end{prop}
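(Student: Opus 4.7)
The plan is to unwind the definitions so that both sides of the equivalence are expressed in terms of the matrix entries $M_{ij}(g) \in \efont^+$ defined by $g \cdot e_j = \sum_i M_{ij}(g) e_i$. Since the basis is good, the chosen lattice $M^+ = \bigoplus_i \efont^+ e_i$ gives the valuation $\vm(\sum a_i e_i) = \min_i \vx(a_i)$, so the valuation on $\M_n(\efont^+)$ (which is just the minimum of $\vx$ over entries) satisfies $\vx(\Mat(g)-\Mat(h)) = \min_j \vm(g \cdot e_j - h \cdot e_j)$.

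The first step is to translate between the conditions at an arbitrary pair $(g,h)$ and the conditions at $(g',1)$. Since $\Gamma_k$ acts by isometries and $h$ is linear, $\vm(g \cdot e_j - h \cdot e_j) = \vm((h^{-1}g) \cdot e_j - e_j)$. Replacing the chosen coordinate $1+p^k a \mapsto a$ by the natural group-homomorphism coordinate $1+p^k a \mapsto \log_p(1+p^k a)/p^k$ (which is permitted by remark \ref{isom} since the two coordinates differ by an isometry, and so the space $\HH^{\lambda,\mu}$ is unchanged), $c(h^{-1}g) = c(g)-c(h)$. Thus $\vp(c(g)-c(h)) \geq i$ is equivalent to $c(h^{-1}g) \in p^i \Zp$. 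Combining this with lemma \ref{mshalt}, the condition that the matrix map belongs to $\HH^{\lambda,\mu}(\Gamma_k, \M_n(\efont^+))$ is equivalent to the condition that $e_j \in M^{\Gamma_k\hyphen\sh,\lambda,\mu}$ for every $j$.

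For the forward direction, assume $M = M^{\Gamma_k\hyphen\sh,\lambda}$. Each $e_j$ is then in $M^{\Gamma_k\hyphen\sh,\lambda,\mu_j}$ for some $\mu_j \in \RR$, and taking $\mu = \min_j \mu_j$ the equivalence just established shows that the matrix map lies in $\HH^{\lambda,\mu} \subset \HH^\lambda$.

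For the backward direction, suppose the matrix map lies in $\HH^{\lambda,\mu}$ for some $\mu$. Then each $e_j$ belongs to $M^{\Gamma_k\hyphen\sh,\lambda,\mu} \subset M^{\Gamma_k\hyphen\sh,\lambda}$. The hypothesis $\lambda \leq k$ is used exactly here: by lemma \ref{mshsub}, $M^{\Gamma_k\hyphen\sh,\lambda}$ is a sub-$\efont$-vector space of $M$, and since the $e_j$ form an $\efont$-basis, $M^{\Gamma_k\hyphen\sh,\lambda} = M$. There is no real obstacle; the only care needed is the passage from pairs $(g,h)$ to pairs $(g',1)$ via the isometry property and the compatible choice of coordinate on $\Gamma_k$.
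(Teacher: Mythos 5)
Your proof is correct and follows essentially the same route as the paper's: identify $\vx(\Mat(g)-\Mat(h))$ with $\min_j \vm(g\cdot e_j - h\cdot e_j)$, thereby reducing membership of the matrix map in $\HH^{\lambda,\mu}$ to all basis vectors lying in $M^{\Gamma_k\hyphen\sh,\lambda,\mu}$, and then invoke lemma \ref{mshsub} (which is where $\lambda \leq k$ enters) to pass from basis vectors to all of $M$. The only difference is that your detour through lemma \ref{mshalt} and the isometry/coordinate change is unnecessary here, since both conditions are already phrased in terms of pairs $(g,h)$ and the equivalence can be read off directly, as the paper does; but that extra step is harmless.
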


\begin{proof}
We fix a good basis $(m_1,\dots,m_n)$ of $M$, and work with the corresponding valuation $\vm$ on $M$. By lemma \ref{mshsub}, we have $M=M^{\Gamma_k\hyphen\sh,\lambda}$ if and only if $m_j \in M^{\Gamma_k\hyphen\sh,\lambda}$ for all $j$. We have $g \cdot m_j = \sum_{i=1}^n\Mat(g)_{i,j} m_i$ by definition of $\Mat(g)$. Hence if $g,h \in \Gamma_k$, then $g \cdot m_j - h \cdot m_j = \sum_{i=1}^n (\Mat(g)_{i,j} - \Mat(h)_{i,j}) m_i$. This implies that if $\ell \geq 0$ and $\mu \in \RR$, then $\vm(g \cdot m_j - h \cdot m_j) \geq p^{\lambda+\ell} + \mu$ if and only if $\vx(\Mat(g)-\Mat(h)) \geq p^{\lambda+\ell} + \mu$. This implies the claim.
\end{proof}

If $M$ is a finite-dimensional $\efont$-vector space with a semi-linear
action of $\Gamma_k$, then $\et \otimes_{\efont} M$ is a finite-dimensional
$\et$-vector space with a semi-linear action of $\Gamma_k$. If $M$ is super-H\"older, there exists $m_0 = m_0(M) \geq 0$ such that $M=M^{\Gamma_k\hyphen\sh,k-m_0}$

\begin{prop}
\label{vecbash}
If $M$ is super-H\"older and $m \geq m_0(M)$, then $(\et \otimes_{\efont} M)^{\Gamma_k\hyphen\sh,k-m} = \efont_m \otimes_{\efont} M$.
\end{prop}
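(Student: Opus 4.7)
The plan is to prove the two inclusions separately. Throughout, fix a good basis $(m_1,\dots,m_n)$ of $M$, write $P(g)=\Mat(g)\in\M_n(\efont^+)$ for the matrix of $g$ in this basis, and use the valuation $\vm$ on $\et\otimes_{\efont}M$ coming from the lattice $\et^+\otimes_{\efont^+}M^+$. By proposition~\ref{mshbas}, the hypothesis $M=M^{\Gamma_k\hyphen\sh,k-m_0}$ translates into $P\in\HH^{k-m_0}(\Gamma_k,\M_n(\efont^+))$; since $m\geq m_0$, we have $k-m\leq k-m_0$, and so also $P\in\HH^{k-m}$ and $m_i\in M^{\Gamma_k\hyphen\sh,k-m}$.

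For the inclusion $\efont_m\otimes_{\efont}M\subseteq(\et\otimes_{\efont}M)^{\Gamma_k\hyphen\sh,k-m}$, I would combine proposition~\ref{etnsh} (which gives $\efont_m\subseteq\et^{\Gamma_k\hyphen\sh,k-m}$) with the fact just recalled that each $m_i$ is super-H\"older of level $k-m$ in $M$. The semi-linear identity
\[ g(fm_i)-fm_i = g(f)(g(m_i)-m_i)+(g(f)-f)m_i \]
combined with the isometry of the $\Gamma_k$-action then yields the estimate, as in the semi-linear analogue of proposition~\ref{propmsh}(1), and one concludes by $\kf$-linearity.

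The hard direction is $(\et\otimes_{\efont}M)^{\Gamma_k\hyphen\sh,k-m}\subseteq\efont_m\otimes_{\efont}M$. Given $x=\sum f_im_i$ with $f_i\in\et$ satisfying a super-H\"older estimate of level $k-m$, the idea is to transfer this estimate to each $f_i$ individually and then apply theorem~\ref{shdecet} to conclude $f_i\in\efont_m$. Writing $F$ and $g(F)$ for the column vectors of the $f_i$'s and $g(f_i)$'s, the key identity is
\[ g(F)-F = P(g)^{-1}\bigl[(g(x)-x)-(P(g)-I)F\bigr], \]
obtained from $g(x)-x\leftrightarrow P(g)g(F)-F = P(g)(g(F)-F)+(P(g)-I)F$. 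Inside the brackets, the entries of $g(x)-x$ satisfy the super-H\"older estimate of level $k-m$ by hypothesis, while $(P(g)-I)F$ does too because $P-I\in\HH^{k-m}$ and $F$ has fixed (hence bounded) entries.

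The main obstacle is controlling $P(g)^{-1}$ uniformly in $g$: a loss of valuation there would destroy the estimate. The trick will be to observe that the cocycle relation $P(gg^{-1})=I$ unfolds as $P(g)\cdot g(P(g^{-1}))=I$, so $P(g)^{-1}=g(P(g^{-1}))$; since $P(g^{-1})\in\M_n(\efont^+)$ (by $\Gamma_k$-stability of $M^+$) and $g$ acts isometrically on $\efont$, the matrix $P(g)^{-1}$ lies in $\M_n(\efont^+)$, i.e.\ has entries with $\vx\geq 0$ uniformly in $g$. This yields the desired estimate on each $g(f_i)-f_i$, giving $f_i\in\et^{\Gamma_k\hyphen\sh,k-m}$ by lemma~\ref{mshalt}, hence $f_i\in\efont_m$ by theorem~\ref{shdecet}.
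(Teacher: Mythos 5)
Your proof is correct. Both inclusions are handled as in the paper: the easy direction is the argument of lemma~\ref{mshsub}, and the hard direction proceeds by writing the orbit of $x$ in a good basis and inverting the linear system to control the coefficients. The one genuine variation is in the final estimate. The paper writes $g(x_\ell)=\sum_j f_j(g)\,(\Mat(g)^{-1})_{\ell,j}$ and concludes by the \emph{function-algebra} lemmas: prop~\ref{mshbas} and prop~\ref{hrng}, including item~(4), which is invoked to show that $g\mapsto\Mat(g)^{-1}$ is itself super-H\"older (the determinant having valuation zero since $\Mat(g)\in\GL_n(\efont^+)$). You instead isolate $g(F)-F=P(g)^{-1}\bigl[(g(x)-x)_{\mathrm{coord}}-(P(g)-\Id)F\bigr]$ and apply lemma~\ref{mshalt}; since the entire matrix $P(g)^{-1}$ is pulled out on the left, you only need the \emph{uniform} integrality $P(g)^{-1}\in\M_n(\efont^+)$ (equivalently $\Mat(g)\in\GL_n(\efont^+)$, which follows from $\Gamma_k$-stability of the lattice, and which your cocycle identity $P(g)^{-1}=g(P(g^{-1}))$ makes transparent), not the stronger statement that $P(g)^{-1}$ is super-H\"older in $g$. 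This bypasses prop~\ref{hrng}(4) and is marginally more elementary, but the underlying computation is the same rearrangement of the same semi-linear relation.
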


\begin{proof}
By the same argument as in the proof of lemma \ref{mshsub}, we see that for $m \geq m_0$, $(\et \otimes_{\efont} M)^{\Gamma_k\hyphen\sh,k-m}$ is a sub-$\efont_m$-vector space of $\et \otimes_{\efont} M$. The space $(\et \otimes_{\efont} M)^{\Gamma_k\hyphen\sh,k-m}$ contains $M$, and therefore also $\efont_m \otimes_{\efont} M$. This proves one inclusion. 

We now prove that $(\et \otimes_{\efont} M)^{\Gamma_k\hyphen\sh,k-m} \subset \efont_m \otimes_{\efont} M$. Fix a good basis $(m_1,\dots,m_n)$ of $M$, the corresponding valuation $\vm$ on $\et \otimes_{\efont} M$, and $m \geq m_0$. Take $x = \sum_{i=1}^n x_i m_i \in \et \otimes_{\efont} M$ and write $g(x) = \sum_{i=1}^n f_i(g) m_i$. We have $x \in (\et \otimes_{\efont} M)^{\Gamma_k\hyphen\sh,k-m}$ if and only if $f_i \in \HH^{k-m}(\Gamma_k,\et)$ for all $i$. In addition, $g(x) = \sum_{i,j} g(x_i) \Mat(g)_{j,i} m_j$. Hence $f_j : g \mapsto \sum_{i=1}^n g(x_i) \Mat(g)_{j,i}$ belongs to $\HH^{k-m}(\Gamma_k,\et)$ for all $j$. We have $g(x_\ell) = \sum_{j=1}^n f_j(g) (\Mat(g)^{-1})_{\ell,j}$. By props \ref{mshbas} and \ref{hrng}, $[g \mapsto g(x_\ell) ] \in \HH^{k-m}(\Gamma_k,\et)$ and therefore $x_\ell \in \et^{\Gamma_k\hyphen\sh,k-m} = \efont_m$ for all $\ell$.
\end{proof}

\begin{coro}
\label{vecbashcor}
If $M$ is super-H\"older, then $(\et \otimes_{\efont} M)^{\sh} =
\efont_\infty \otimes_{\efont} M$.
\end{coro}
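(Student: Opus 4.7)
The plan is to derive the corollary by taking a union of Proposition \ref{vecbash} over sufficiently negative parameters $\lambda$. Recall from the discussion following Lemma \ref{shsbgr} that $(\et \otimes_{\efont} M)^{\sh}$ is by definition the union over $\lambda \in \RR$ of the subspaces $(\et \otimes_{\efont} M)^{\Gamma_k\hyphen\sh,\lambda}$. These form an increasing family as $\lambda$ decreases: shrinking $\lambda$ shrinks $p^\lambda$, which weakens the inequality $\vm(f(x)-f(y)) \geq p^\lambda \cdot p^i + \mu$ defining $\HH^{\lambda,\mu}$ and hence enlarges both $\HH^\lambda$ and the corresponding spaces of super-Hölder vectors. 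So I can write the union by letting $\lambda \to -\infty$, or equivalently by setting $\lambda = k-m$ and letting $m \to \infty$.

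Concretely, I would prove the two inclusions as follows. For $\supseteq$, given $x \in \efont_\infty \otimes_{\efont} M$, there is some $m$ with $x \in \efont_m \otimes_{\efont} M$, and by enlarging $m$ if necessary I may assume $m \geq m_0(M)$. Proposition \ref{vecbash} then places $x$ in $(\et \otimes_{\efont} M)^{\Gamma_k\hyphen\sh, k-m} \subset (\et \otimes_{\efont} M)^{\sh}$. For $\subseteq$, given $x \in (\et \otimes_{\efont} M)^{\sh}$, there is some $\lambda$ with $x \in (\et \otimes_{\efont} M)^{\Gamma_k\hyphen\sh,\lambda}$; choose $m \geq m_0(M)$ large enough that $k - m \leq \lambda$, so that $x$ lies in the larger space $(\et \otimes_{\efont} M)^{\Gamma_k\hyphen\sh, k-m}$, which by Proposition \ref{vecbash} equals $\efont_m \otimes_{\efont} M \subset \efont_\infty \otimes_{\efont} M$.

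There is no real obstacle here, as this is a formal consequence of the finite-level statement already proved in Proposition \ref{vecbash}. The only mild point to verify is that imposing the cutoff $m \geq m_0(M)$ does not change either of the unions involved, which is immediate since $\{m \geq m_0(M)\}$ is cofinal in $\ZZ_{\geq 0}$ and both $\efont_m$ and $(\et \otimes_{\efont} M)^{\Gamma_k\hyphen\sh,k-m}$ are monotone in $m$.
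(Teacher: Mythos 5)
Your argument is correct and is exactly the (unwritten) argument the paper intends: the corollary is the union over $m$ of Proposition \ref{vecbash}, using the monotonicity of $(\et \otimes_{\efont} M)^{\Gamma_k\hyphen\sh,\lambda}$ as $\lambda$ decreases and the cofinality of $\{m \geq m_0(M)\}$. Nothing is missing.
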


The field $\efont = \kf \dpar{X}$ is equipped with its action of $\Zp^\times$ and with the $\kf$-linear Frobenius map $\phi$ given by $\phi(f)(X) = f(X^p)$. Let $\Gamma = \Gamma_k$ with $k \geq 1$. A $(\phi,\Gamma)$-module $\dfont$ over $\efont$ is a finite-dimensional $\efont$-vector space, endowed with commuting, semi-linear actions of $\phi$ and $\Gamma$, such that the action of $\Gamma$ is continuous and such that $\Mat(\phi)$ is invertible (in any basis of $\dfont$).

\begin{prop}
\label{phigsh}
If $\dfont$ is a $(\phi,\Gamma)$-module over $\efont$, then $\dfont=\dfont^{\Gamma_k\hyphen\sh,k}$.
\end{prop}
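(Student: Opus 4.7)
\emph{Plan.} By Proposition \ref{gamlat}, choose a $\Gamma_k$-stable $\efont^+$-lattice of $\dfont$ and fix a corresponding good basis $(e_1,\dots,e_d)$. Let $G(g)=\Mat(g)\in\M_d(\efont^+)$ and $P=\Mat(\phi)\in\GL_d(\efont)$. By Proposition \ref{mshbas}, it suffices to exhibit $\mu\in\RR$ such that $\vx(G(g)-I)\ge p^k\cdot p^i+\mu$ for every $i\ge 0$ and every $g\in 1+p^{k+i}\Zp$.

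The key input is the commutation relation $\phi\circ g=g\circ\phi$, which in matrix form reads $P\cdot\phi(G(g))=G(g)\cdot g(P)$. Rearranging and subtracting the identity gives
\[
G(g)-I=P\cdot\phi(G(g)-I)\cdot g(P)^{-1}+(P-g(P))\cdot g(P)^{-1}.
\]
Let $\alpha=\vx(P)$ and $\beta=\vx(P^{-1})$, both finite. Since $\phi$ sends $X$ to $X^p$, we have $\vx(\phi(G(g)-I))=p\cdot\vx(G(g)-I)$; since each $g\in\Gamma_k$ acts on $\efont$ by isometries, $\vx(g(P)^{-1})=\vx(P^{-1})=\beta$. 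Consequently,
\[
\vx(G(g)-I)\ge\min\bigl(\alpha+\beta+p\cdot\vx(G(g)-I),\,\vx(P-g(P))+\beta\bigr).
\]
By Corollary \ref{egsk}, every coefficient $P_{ij}$ lies in $\efont=\efont^{\Gamma_k\hyphen\sh,k}$, so there exists $\mu_P\in\RR$ with $\vx(P-g(P))\ge p^k\cdot p^i+\mu_P$ whenever $g\in 1+p^{k+i}\Zp$.

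Set $D_1=\alpha+\beta$, $D_2=\beta+\mu_P$, and $V=\vx(G(g)-I)$; the inequality reads $V\ge\min(D_1+pV,\,p^kp^i+D_2)$. If the minimum is attained by the first term, then $V\ge D_1+pV$, forcing $V\le -D_1/(p-1)$; otherwise $V\ge p^kp^i+D_2$. Hence the condition $V>-D_1/(p-1)$ automatically implies $V\ge p^kp^i+D_2$. Since the action of $\Gamma_k$ on $\dfont$ is continuous, $V\to+\infty$ as $g\to 1$, so there exists $i_0\ge 0$ with $V>-D_1/(p-1)$ for every $g\in 1+p^{k+i_0}\Zp$. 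For $i\ge i_0$ this yields $\vx(G(g)-I)\ge p^kp^i+D_2$; for $i<i_0$ the bound $\vx(G(g)-I)\ge 0$ (coming from $G(g)\in\M_d(\efont^+)$) suffices, and setting $\mu:=\min(D_2,-p^kp^{i_0})$ gives $p^kp^i+\mu<0\le\vx(G(g)-I)$ in that range. With this single $\mu$ we conclude.

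\emph{Main obstacle.} The heart of the argument is the bootstrap built into the commutation formula: the combination of conjugation by $P$ and the Frobenius $\phi$ forces $\vx(G(g)-I)$ to satisfy an inequality in which it appears on both sides, with a multiplicative factor $p$ on the right. Combined with the super-Hölder regularity of the entries of $P$ furnished by Corollary \ref{egsk}, this pins down the target exponent $p^k\cdot p^i$. The apparent circularity is resolved by the dichotomy above, which forces the desired inequality once $V$ is large enough, while continuity of the $\Gamma_k$-action is what guarantees that this largeness holds uniformly for $g$ sufficiently close to $1$.
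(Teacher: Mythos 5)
Your argument is correct and takes a genuinely different route from the paper's proof, although both start from the same commutation relation $P\phi(G(g)) = G(g)g(P)$, rewritten so that $G(g)-I$ is expressed through $\phi(G(g)-I)$ (whose valuation is multiplied by $p$) plus the error term $(P-g(P))g(P)^{-1}$ controlled by the super-H\"older regularity of $P$ furnished by Corollary~\ref{egsk}. The paper's proof first normalizes so that $P\in\M_n(\efont^+)$, introduces $H_g$ with $G_g=\Id+X^rH_g$, iterates the recurrence to produce an explicit series $H_g=\sum_{i\geq 0}f_i(g)$, shows each $f_i$ lies in a fixed $\HH^{\ell,\mu}(\Gamma_\ell,\M_n(\efont^+))$ using the ring- and $\phi$-stability of Lemma~\ref{hmnep}, checks convergence of the series in that space, and finally descends from $\Gamma_\ell$ to $\Gamma_k$ via Lemma~\ref{shsbgr}. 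You instead take valuations of the recurrence directly, obtaining the self-referential bound $V\ge\min(D_1+pV,\,p^kp^i+D_2)$ with $V=\vx(G(g)-I)$, and resolve the circularity by the dichotomy: the first branch forces $V\le -D_1/(p-1)$, so once $V$ exceeds this fixed threshold (which continuity of the $\Gamma_k$-action guarantees on $1+p^{k+i_0}\Zp$ for some $i_0$) the second branch must hold. This bypasses both the construction of the iterated series and the ring-theoretic lemmas, and the descent between subgroups is replaced by simply absorbing the finitely many small $i$ into the constant $\mu$. The trade-off is that the paper's route also produces an explicit convergent expansion of $H_g$ in super-H\"older functions, which is lost in the purely valuation-theoretic version; both arrive at the optimal exponent $\lambda=k$.
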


\begin{lemm}
\label{hmnep}
If $\ell \geq 1$ and $\lambda,\mu \in \RR$, then $\HH^{\lambda,\mu}(\Gamma_\ell,\M_n(\efont^+))$ is a ring, that is stable under $\phi$.
\end{lemm}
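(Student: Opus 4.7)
The plan is to deduce both assertions essentially formally from material established earlier in the paper, so no genuine obstacle is expected; the lemma is a bookkeeping statement.

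First, I equip $\M_n(\efont^+)$ with the entrywise valuation $\vm(A) = \min_{i,j} \vx(A_{i,j})$, which is submultiplicative ($\vm(AB) \geq \vm(A) + \vm(B)$ by the usual argument), takes values in $\RR_{\geq 0}$ on $\M_n(\efont^+)$, and makes $\M_n(\efont)$ a valued $\kf$-algebra. By remark \ref{isom}, $\HH^{\lambda,\mu}(\Gamma_\ell,\M_n(\efont^+))$ does not depend on the particular coordinate $c : \Gamma_\ell \to \Zp$ used to transport the definition of \S\ref{subshf}, so I may interpret it as $\HH^{\lambda,\mu}(\Zp,\M_n(\efont^+))$. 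The ring property is then a direct specialization of item (2) of prop \ref{hrng}, taking $M = \M_n(\efont)$ and $M_0 = \M_n(\efont^+)$ in the notation there.

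For stability under $\phi$, I would exploit that $\phi$ acts on $\efont^+$ by $\phi(h)(X) = h(X^p)$, so $\vx(\phi(h)) = p\, \vx(h)$, and extends entrywise to a $\kf$-linear map $\phi : \M_n(\efont^+) \to \M_n(\efont^+)$ satisfying $\vm(\phi(A)) = p\, \vm(A)$. Since $\vm(A) \geq 0$ on $\M_n(\efont^+)$, this yields the key inequality $\vm(\phi(A)) \geq \vm(A)$. Hence for $f \in \HH^{\lambda,\mu}(\Gamma_\ell,\M_n(\efont^+))$ and $g,h \in \Gamma_\ell$ with $\vp(c(g)-c(h)) \geq i$, the $\kf$-linearity of $\phi$ gives
\[ \vm\bigl((\phi \circ f)(g) - (\phi \circ f)(h)\bigr) = \vm\bigl(\phi(f(g)-f(h))\bigr) \geq \vm(f(g)-f(h)) \geq p^{\lambda+i} + \mu, \]
so that $\phi \circ f \in \HH^{\lambda,\mu}(\Gamma_\ell,\M_n(\efont^+))$, as required.

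No step here is delicate. The only small point worth flagging is checking that $\vm(\phi(A)) \geq \vm(A)$: it is crucial that one works with $\M_n(\efont^+)$ rather than $\M_n(\efont)$, so that valuations are nonnegative and the factor $p$ coming from Frobenius only helps. I expect this lemma to feed directly into prop \ref{phigsh}, most likely combined with item (4) of prop \ref{hrng} in order to invert $\Mat(\phi)$ while preserving the super-H\"older estimate.
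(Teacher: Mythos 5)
Your proposal is correct and matches the paper's proof, which likewise deduces the ring property from prop \ref{hrng} and stability under $\phi$ from the observation that $\vx(\phi(A)) \geq \vx(A)$ for $A \in \M_n(\efont^+)$; you have simply spelled out the details more explicitly.
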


\begin{proof}
The first claim follows from prop \ref{hrng}. The second one follows from the fact that if $M \in \M_n(\efont^+)$, then $\vx(\phi(M)) \geq \vx(M)$.
\end{proof}

\begin{proof}[Proof of of proposition \ref{phigsh}]
Choose a good basis $(d_1,\dots,d_n)$ of $\dfont$. We can replace $(d_1,\dots,d_n)$ by $(X^s d_1,\dots, X^s d_n)$ for some $s \geq 0$, and assume that $P = \Mat(\phi) \in \M_n(\efont^+)$. 
Take $r \geq 1$ such that $X^r P^{-1} \in X \M_n(\efont^+)$. Let $G_g$ be the matrix of $g \in \Gamma$. By
continuity of the map $\Gamma \to \GL_n(\efont^+)$, $g \mapsto G_g$, there
exists $\ell \geq k$ such that for all $g \in \Gamma_\ell$, we have $\vx(G_g-\Id) \geq
r$. Write $G_g = \Id + X^r H_g$ with $H_g \in \M_n(\efont^+)$.

By definition of $r$, we have $X^r g(P)^{-1} \in X \M_n(\efont^+)$, so
that if $Q_g = X^{r(p-1)}g(P)^{-1}$, then $Q_g \in X\M_n(\efont^+)$. The
commutation relation between $\phi$ and $\Gamma_\ell$ gives $P\phi(G_g)
= G_g g(P)$ for all $g \in \Gamma_\ell$. 
Therefore, $P\phi(\Id+X^rH_g) = (\Id+X^rH_g)g(P)$, so that
\[Pg(P)^{-1}-\Id = X^r(H_g-P\phi(H_g)Q_g). \]
This implies that $Pg(P)^{-1}-\Id \in X^r\M_n(\efont^+)$.
Let \[ f(g) = H_g-P\phi(H_g)Q_g = X^{-r}\left(Pg(P)^{-1}-\Id\right). \] 
Recall that $Q_g,  f(g) \in \M_n(\efont^+)$ for all $g \in \Gamma_\ell$, and that 
(compare with (4) of prop \ref{hrng})
\[ Q_g = X^{r(p-1)} g(P)^{-1} = X^{r(p-1)} g(\tco(P)) g(\det(P)^{-1}) \]
and 
\[  f(g) = X^{-r}\left(P g(\tco(P)) g(\det(P)^{-1})-\Id\right). \]
By props \ref{propmsh} and \ref{etnsh}, and lemma \ref{hmnep}, there
exists $\mu \in \RR$ such that $g \mapsto Q_g$ and $g \mapsto f(g)$ 
belong to $\HH^{\ell,\mu}(\Gamma_\ell,\M_n(\efont^+))$.

Let $f_0 = f$ and for $i \geq 1$,  let $f_i : \Gamma_\ell \to
\M_n(\efont^+)$ be the function \[ g \mapsto
P\phi(P)\cdots\phi^{i-1}(P) \cdot \phi^i(f(g)) \cdot
\phi^{i-1}(Q_g)\cdots\phi(Q_g)Q_g. \] 

Since $P \in \M_n(\efont^+)$, lemma \ref{hmnep} implies that $f_i \in
\HH^{\ell,\mu}(\Gamma_\ell,\M_n(\efont^+))$. In addition, $\vx(Q_g) \geq
1$, so that $\vx(\phi^{i-1}(Q_g)\cdots\phi(Q_g)Q_g) \geq (p^i-1)/(p-1)$.
Hence $\sum_{i \geq 0} f_i$ converges in
$\HH^{\ell,\mu}(\Gamma_\ell,\M_n(\efont^+))$, and we let $T(f)$ be its
limit. 

We have $T(f)(g) = H_g$. This implies that $g \mapsto H_g$ belongs to
$\HH^{\ell,\mu}(\Gamma_\ell,\M_n(\efont^+))$, and hence so does $g
\mapsto G_g = \Id + X^r H_g$. 

We therefore have $\dfont=\dfont^{\Gamma_\ell\hyphen\sh,\ell}$, so that
$\dfont=\dfont^{\Gamma_k\hyphen\sh,k}$ by lemma \ref{shsbgr}.
\end{proof}

\begin{coro}
\label{decompgm}
If $\dfont$ is a $(\phi,\Gamma)$-module over $\efont$, then $(\et \otimes_{\efont} \dfont)^{\Gamma_k\hyphen\sh,k-m} = \efont_m \otimes_{\efont} \dfont$ for $m \geq 0$.
\end{coro}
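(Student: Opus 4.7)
The plan is to observe that this corollary is essentially a direct combination of two results already established: Proposition \ref{phigsh}, which asserts that any $(\phi,\Gamma)$-module $\dfont$ over $\efont$ satisfies $\dfont = \dfont^{\Gamma_k\hyphen\sh,k}$, and Proposition \ref{vecbash}, which gives the decompletion statement for arbitrary super-H\"older finite-dimensional representations of $\Gamma_k$.

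More precisely, I would proceed as follows. By Proposition \ref{phigsh}, $\dfont$ is super-H\"older in the sense defined just before Proposition \ref{mshbas}, and moreover we may take $m_0(\dfont) = 0$, since the equality $\dfont = \dfont^{\Gamma_k\hyphen\sh,k}$ is exactly the statement that $\dfont = \dfont^{\Gamma_k\hyphen\sh,k-m_0}$ with $m_0 = 0$. Proposition \ref{vecbash} then applies for every $m \geq 0$, giving the desired identity
\[ (\et \otimes_{\efont} \dfont)^{\Gamma_k\hyphen\sh,k-m} = \efont_m \otimes_{\efont} \dfont. \]

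There is essentially no obstacle, since all the real work has been done: Proposition \ref{phigsh} provides the super-H\"older property of the $\Gamma_k$-action (via the explicit fixed-point/contraction argument using $\phi$), and Proposition \ref{vecbash} reduces decompletion of the extended representation to the scalar decompletion $\et^{\Gamma_k\hyphen\sh,k-m} = \efont_m$ from Theorem \ref{shdecet}. The only minor point to mention is that the valuation on $\et \otimes_{\efont} \dfont$ used implicitly in the statement is the one obtained by extending the valuation $\vm$ coming from a good basis of $\dfont$, so that the notation $(\et \otimes_{\efont} \dfont)^{\Gamma_k\hyphen\sh,k-m}$ is well-defined and matches the hypothesis of Proposition \ref{vecbash}.
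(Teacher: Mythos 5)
Your proof is correct and matches the paper's own (implicit) argument exactly: Proposition~\ref{phigsh} gives $\dfont = \dfont^{\Gamma_k\hyphen\sh,k}$, so $\dfont$ is super-H\"older with $m_0(\dfont)=0$, and Proposition~\ref{vecbash} then yields the stated equality for every $m \geq 0$. Your remark about the valuation coming from a good basis is the right point to keep in mind, and it is precisely what makes the notation in the statement well-posed.
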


We now prove the following result, which generalizes prop \ref{phigsh}. Note that the underlying constants are not as good as in the case of a $(\phi,\Gamma)$-module.

\begin{prop}
\label{gamodsh}
If $M$ is a finite-dimensional $\efont$-vector space with a continuous semi-linear action of $\Gamma_k$, then $M=M^{\sh}$. 
\end{prop}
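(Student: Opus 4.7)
The plan is to fix a good basis of $M$, giving rise to a matrix cocycle $g \mapsto G_g \in \GL_n(\efont^+)$ satisfying $G_{gh} = G_g \cdot g(G_h)$, and to show this function is super-Hölder on some small open subgroup $\Gamma_\ell \subset \Gamma_k$; the analogue of prop \ref{mshbas} for $\Gamma_\ell$ combined with lemma \ref{shsbgr} will then yield $M = M^{\sh}$. By continuity of the action, choose $\ell \geq k$ large enough that $G_g - \Id \in X \cdot \M_n(\efont^+)$ for all $g \in \Gamma_\ell$. Let $g_0$ be a topological generator of $\Gamma_\ell$, set $A_i = G_{g_0^{p^i}} - \Id$ and $N_i = \vx(A_i) \geq 1$.

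The heart of the argument is the recursive bound
\[ N_{i+1} \;\geq\; \min\bigl(p^{\ell + i},\, p\, N_i\bigr), \]
which by a straightforward induction yields $N_i \geq p^i \cdot \min(p^{\ell - 1}, N_0)$, that is, the required $p^i$-type growth. To prove the recursion I would apply the cocycle to $h = g_0^{p^i} \in \Gamma_{\ell + i}$ and write
\[ G_{g_0^{p^{i+1}}} \;=\; \prod_{j=0}^{p-1} h^j(G_h) \;=\; \prod_{j=0}^{p-1}\bigl(\Id + h^j(A_i)\bigr), \]
setting $E_j = (h^j - 1)(A_i)$, so that each factor equals $\Id + A_i + E_j$ with $E_0 = 0$. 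Since $h^j \in \Gamma_{\ell + i}$ and the entries of $A_i$ lie in $\efont^+$, prop \ref{etnsh} applied entry-wise gives $\vx(E_j) \geq p^{\ell + i}$. I would then expand this product and group its terms by the degree $m$ in the matrices $A_i + E_j$: at each degree $1 \leq m \leq p-1$ the pure $A_i^m$ contribution equals $\binom{p}{m} A_i^m$, which vanishes in characteristic $p$, so what remains at such $m$ necessarily involves at least one factor $E_j$ and hence has $\vx \geq p^{\ell + i}$. At degree $m = p$ one keeps the pure term $A_i^p$ of $\vx \geq p N_i$, while the mixed terms have strictly larger valuation. Taking the minimum gives the recursion.

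To close, any $g \in \Gamma_{\ell + i}$ is of the form $(g_0^{p^i})^s$ for some $s \in \Zp$; expanding $G_{(g_0^{p^i})^s}$ as a product of isometric translates of $G_{g_0^{p^i}}$ shows that $\vx(G_g - \Id) \geq N_i$, first for $s \in \ZZ_{\geq 0}$ and then for all $s \in \Zp$ by continuity and lower semi-continuity of $\vx$. Absorbing the finitely many cases $i < \ell - k$ into a suitably negative $\mu$ gives $g \mapsto G_g \in \HH^{\lambda, \mu}(\Gamma_\ell, \M_n(\efont^+))$ for $\lambda = \log_p \min(p^{\ell-1}, N_0)$ and some $\mu \in \RR$. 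The main difficulty will be the combinatorial bookkeeping in the non-commutative expansion of $\prod_j(\Id + A_i + E_j)$: one must cleanly separate, at each degree $m$, the pure $A_i^m$ contributions (killed by $\binom{p}{m} \equiv 0 \pmod p$ for $1 \leq m \leq p-1$) from the surviving mixed terms, and then thread the two competing bounds $p^{\ell+i}$ and $p N_i$ through the induction.
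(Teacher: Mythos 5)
Your proof is correct and rests on the same engine as the paper's: apply the cocycle relation to write $G_{h^p}$ as a $p$-fold product of conjugates of $G_h$, use prop \ref{etnsh} (the $n=0$ case) to control the error made by replacing $h^j(A)$ with $A$, and exploit the vanishing of $\binom{p}{m}$ for $1 \leq m \leq p-1$ in characteristic $p$ to see that the first nontrivial surviving ``pure'' term is $A^p$. The differences are bookkeeping, not substance. You set up a min-recursion $N_{i+1} \geq \min(p^{\ell+i}, pN_i)$ starting from $N_0 \geq 1$, work with a single topological generator, and then pass to all of $\Gamma_{\ell+i}$ by density and closedness of valuation balls. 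The paper instead normalizes at the start by picking $k \geq \ell+1$ with $\vx(G_g - \Id) \geq p^\ell$ for all $g \in \Gamma_k$; with that choice the two branches of your recursion collapse (one always has $p^{k+i-1} \geq p^{\ell+i}$), and the induction gives the clean uniform bound $\vx(G_g - \Id) \geq p^{\ell+i}$ directly for every $g \in \Gamma_{k+i}$ --- no generator and no density argument needed. Your version buys a slightly more explicit picture of how the valuation $N_i$ grows, at the cost of the density step; the paper's buys cleaner constants and a shorter induction. Two small remarks: you should be a little more careful in claiming the non-commutative degree-$m$ expansion of $\prod_j(\Id+A_i+E_j)$ gives coefficient $\binom{p}{m}$ on $A_i^m$ --- this is true because, after fixing an ordered $m$-subset of factors and setting all $E_j=0$, each ordered choice contributes exactly $A_i^m$, and there are $\binom{p}{m}$ of them --- and your invocation of ``lower semi-continuity of $\vx$'' should really just be ``$\Id + X^{N}\M_n(\efont^+)$ is closed,'' which is what makes the extension from $s \in \ZZ_{\geq 0}$ to $s \in \Zp$ work.
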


\begin{proof}
Choose a good basis of $M$. Let $f(g)$ denote the matrix of $g \in \Gamma$ in this basis. If $\ell \geq 1$, there exists $k \geq \ell+1$ such that $f(g) \in \Id + X^{p^\ell} \M_n(\efont^+)$ for all $g \in 1+p^k \Zp$. Write $f(g) = \Id + X^{p^\ell} H$. The cocycle formula gives \[ f(g^p) = (\Id + X^{p^\ell} H) (\Id + g(X^{p^\ell} H))  \cdots (\Id + g^{p-1}(X^{p^\ell} H)). \]
Prop \ref{etnsh}, with $n=0$, implies that $g^m(X^{p^\ell} H) \equiv X^{p^\ell} H \bmod{X^{p^k}}$ for all $0 \leq m \leq p-1$. Hence $f(g^p) \equiv (\Id + X^{p^\ell} H)^p \bmod{X^{p^k}}$.  This implies that $f(g^p) \equiv \Id + X^{p^{\ell+1}} H^p \bmod{X^{p^k}}$ so that $f(g^p) = \Id \bmod{X^{p^{\ell+1}}}$ since $k \geq \ell+1$.

Since $(1+p^k \Zp)^p = 1+p^{k+1} \Zp$, the above computation implies by induction on $i$ that $f(1+p^{k+i} \Zp) \subset \Id + X^{p^{\ell+i}} \M_n(\efont^+)$ for all $i \geq 0$. 

This implies that $M=M^{\Gamma_k\hyphen\sh,\ell,0}$ by lemma \ref{mshalt}.
\end{proof}

\begin{coro}
\label{senfin}
Let $N$ be an $\efont$-vector space, with a compatible valuation and a semi-linear action of $\Gamma_k$ by isometries. Let $N^{\fin}$ denote the set of $x \in N$ that belong to a finite dimensional $\efont$-vector space stable under $\Gamma_k$, in analogy with classical Sen theory. 

Prop \ref{gamodsh} implies that $N^{\fin} \subset N^{\sh}$. In particular, if $N=\et$, then $\et^{\fin} = \et^{\sh} = \efont_\infty$.
\end{coro}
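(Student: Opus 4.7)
The plan is to handle the two assertions in turn. For the inclusion $N^{\fin} \subset N^{\sh}$, I take $x \in N^{\fin}$ and choose a finite-dimensional $\efont$-subspace $M \subset N$ that contains $x$ and is stable under $\Gamma_k$. By prop \ref{gamlat}, $M$ admits a $\Gamma_k$-stable $\efont^+$-lattice, and I fix a corresponding good basis $(m_1, \dots, m_n)$, setting $\mu_N = \min_j v_N(m_j) \in \RR$. Prop \ref{gamodsh} supplies $\ell \geq 1$ with $v_X(\Mat(g) - \Id) \geq p^{\ell + i}$ for $g \in 1 + p^{k+i}\Zp$. Using that $v_N$ is compatible with multiplication by $\efont$, this upgrades to $v_N(g \cdot m_j - m_j) \geq p^{\ell + i} + \mu_N$, showing $m_j \in N^{\Gamma_k\hyphen\sh, \ell, \mu_N}$ for all $j$. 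Writing $x = \sum_i x_i m_i$ with $x_i \in \efont$, and using $\efont = \efont^{\Gamma_k\hyphen\sh, k}$ from coro \ref{egsk} combined with the ring structure of $\HH^\lambda$ from prop \ref{hrng}, I conclude that $x \in N^{\sh}$.

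For $\et^{\fin} = \et^{\sh} = \efont_\infty$, I combine the previous step with theorem \ref{shdecet} to get $\et^{\fin} \subset \et^{\sh} = \efont_\infty$. For the reverse inclusion, any $f \in \efont_\infty$ lies in some $\efont_n$, which is a finite-dimensional $\efont$-subspace of $\et$ (of dimension $p^n$) stable under $\Zp^\times$ and in particular under $\Gamma_k$; hence $f \in \et^{\fin}$, and the three sets coincide.

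The main (and really only) subtlety is the transfer of the super-H\"older property from the lattice valuation on $M$ implicit in prop \ref{gamodsh} to the given valuation $v_N|_M$. Since the $v_N$-values of the finitely many basis vectors $m_j$ are bounded below, and multiplication by $\efont^+$ does not decrease $v_N$, this transfer costs only the finite constant $\mu_N$, which is absorbed into the $\mu$-parameter of the super-H\"older definition; no new estimate beyond prop \ref{gamodsh} is needed.
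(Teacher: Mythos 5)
Your proof is correct and fills in exactly the details that the paper leaves implicit: the paper's ``proof'' of this corollary is just the one-line assertion that prop \ref{gamodsh} implies $N^{\fin}\subset N^{\sh}$, and your argument is the natural expansion of that assertion. You have also correctly isolated the one genuine point requiring care, namely that prop \ref{gamodsh} is proved for the lattice valuation on $M$ while the conclusion must be phrased in terms of $v_N|_M$; the inequality $v_N(\sum x_i m_i)\geq \min_i\vx(x_i)+\mu_N$ coming from the compatibility of $v_N$ with the $\efont$-module structure is precisely what transfers the estimate, at the cost of an additive shift absorbed into $\mu$, and the passage from the $m_j$'s to a general $x=\sum x_i m_i$ is the argument of lemma \ref{mshsub} applied to $N$. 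The reverse inclusion $\efont_\infty\subset\et^{\fin}$ via the finite-dimensionality and $\Gamma_k$-stability of each $\efont_n$ is also what the paper intends. One small remark: prop \ref{gamlat} and prop \ref{gamodsh} require the $\Gamma_k$-action on $M$ to be continuous, which is not an automatic consequence of acting by isometries; the paper implicitly builds this into the notion of a $\Gamma_k$-stable finite-dimensional subspace, and you are right to invoke those propositions under that (implicit) standing hypothesis.
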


\subsection{The field of norms}
\label{secfon}

Let $K$ be a finite extension of $\Qp$. Let $K_n = K (\mu_{p^n})$ and let $K_\infty = \cup_{n \geq 0} K_n$. The field of norms of the extension $K(\mu_{p^\infty})/K$ is defined and studied in \cite{W83}. It is the set of sequences $\{x_n\}_{n \geq 0}$ where $x_n \in K_n$ and $\Nm_{K_{n+1}/K_n}(x_{n+1}) = x_n$ for all $n \geq 0$. This set has a natural structure of a field of characteristic $p$ whose residue field is that of $K_\infty$ (\S 2.1 of ibid), which we denote by $\efont_K$. If $K=\Qp$, then $\efont_{\Qp} = \Fp \dpar{X}$, where $X=\{x_n\}_{n \geq 0}$ with $x_n = 1-\zeta_{p^n}$ for $n \geq 1$. When $K$ is a finite extension of $\Qp$, $\efont_K$ is a finite separable extension of $\efont_{\Qp}$ of degree $[K_\infty:(\Qp)_\infty]$ (\S 3.1 of ibid). 

Let $\Gamma_K = \Gal( K_\infty  / K)$, so that $\Gamma_K$ is isomorphic to an open subgroup of $\Zp^\times$ via the cyclotomic character $\chi_{\cyc}$. The group $\Gamma_K$ acts naturally on $\efont_K$, and if $g \in \Gamma_K$, then $g(X) = (1+X)^{\chi_{\cyc}(g)}-1$. Let $\phi : \efont_K \to \efont_K$ denote the map $y \mapsto y^p$. Let $\et_K$ denote the $X$-adic completion of $\cup_{n \geq 0} \phi^{-n}(\efont_K)$. In particular, $\et_{\Qp} = \et$ in the notation of \S\ref{secdecomp}, and $\et_K$ is the tilt of $\widehat{K}_\infty$ (\S 4.3 of ibid and \S 3 of \cite{S12}).

\begin{lemm}
\label{ffpgm}
We have $\phi^{-n}(\efont_K) = \efont_n \otimes_{\efont} \efont_K$ for all $n$, and $\et_K = \et \otimes_{\efont} \efont_K$. 
\end{lemm}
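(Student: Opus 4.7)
The plan is to define the natural multiplication map
\[ \mu_n \colon \efont_n \otimes_{\efont} \efont_K \longrightarrow \phi^{-n}(\efont_K), \qquad a \otimes b \longmapsto ab, \]
well defined because $\efont_n = \phi^{-n}(\efont)$ and $\efont_K$ both sit inside $\phi^{-n}(\efont_K)$, and then to prove it is an isomorphism of $\efont_K$-algebras by a dimension count. The second assertion will follow by taking unions and then $X$-adic completions.

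For the source, I would observe that $\efont_n = \efont[Z]/(Z^{p^n}-X)$ is purely inseparable of degree $p^n$ over $\efont$ while $\efont_K/\efont$ is finite separable. The classical linear disjointness of separable and purely inseparable extensions (any element of a purely inseparable extension of $\efont$ that is separable over $\efont$ must itself lie in $\efont$, since its minimal polynomial divides a polynomial of the form $(T-x)^{p^N}$) then gives that $\efont_n \otimes_{\efont} \efont_K = \efont_K[Z]/(Z^{p^n}-X)$ is a field of dimension $p^n$ over $\efont_K$; in particular $\mu_n$ is injective.

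For the target, the $p^n$-th power map is a ring isomorphism $\phi^n \colon \phi^{-n}(\efont_K) \xrightarrow{\sim} \efont_K$ carrying the subring $\efont_K \subset \phi^{-n}(\efont_K)$ onto $\efont_K^{p^n} \subset \efont_K$, so $[\phi^{-n}(\efont_K) \colon \efont_K] = [\efont_K \colon \efont_K^{p^n}]$. Because the residue field of $\efont_K$ is the residue field of $K_\infty$, which is finite hence perfect, $\efont_K$ is of the form $k\dpar{\pi}$ for some uniformizer $\pi$; this gives $[\efont_K \colon \efont_K^p] = p$ with basis $1,\pi,\dots,\pi^{p-1}$, and iterating, $[\efont_K \colon \efont_K^{p^n}] = p^n$. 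Thus $\mu_n$ is an injection between $\efont_K$-algebras of equal finite dimension, hence an isomorphism onto $\phi^{-n}(\efont_K)$.

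For the second equality, taking the union over $n$ of the first equality and using that $\efont_K$ is finite over $\efont$ gives $\cup_n \phi^{-n}(\efont_K) = \efont_\infty \otimes_{\efont} \efont_K$. This is a finite-dimensional $\efont_\infty$-vector space, whose $X$-adic topology agrees with the product topology in any basis, so its completion is $\et \otimes_{\efont_\infty} (\efont_\infty \otimes_{\efont} \efont_K) = \et \otimes_{\efont} \efont_K$, yielding $\et_K = \et \otimes_{\efont} \efont_K$. The only non-formal step is the dimension equality $[\efont_K \colon \efont_K^{p^n}] = p^n$, which rests on the perfectness of the residue field of $\efont_K$; everything else is a formal combination of linear disjointness and completion.
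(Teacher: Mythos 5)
Your argument is correct. For the first assertion it is essentially the paper's proof: linear disjointness of the purely inseparable extension $\efont_n/\efont$ and the separable extension $\efont_K/\efont$ gives injectivity of the multiplication map, and a degree count gives surjectivity. (The paper's degree count is over $\efont$, via $\phi^n$ carrying $\phi^{-n}(\efont_K)\supset\efont_n$ onto $\efont_K\supset\efont$ so that $[\phi^{-n}(\efont_K):\efont_n]=[\efont_K:\efont]$; you count over $\efont_K$ via $[\efont_K:\efont_K^{p^n}]=p^n$, using perfectness of the residue field. Both are fine.)

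For the second assertion you take a genuinely different route. The paper observes that $\et\otimes_{\efont}\efont_K\to\et_K$ has image a finite-dimensional $\et$-subspace containing the dense subset $\efont_\infty\otimes_\efont\efont_K$, hence is surjective, and then invokes the equality $[\et_K:\et]=[\efont_K:\efont]$, which it takes from the tilting and field-of-norms theory cited just before the lemma. You instead argue that the $X$-adic completion of $\efont_\infty\otimes_\efont\efont_K$ is $\et\otimes_\efont\efont_K$, which effectively reproves that degree equality rather than citing it — a small gain in self-containedness. The one step that deserves more than an assertion is that ``the $X$-adic topology agrees with the product topology in any basis'': $\efont_\infty$ is not complete, and norm equivalence on finite-dimensional spaces is automatic only over complete fields. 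It is true here, because on each layer $\efont_n\otimes_\efont\efont_K=\phi^{-n}(\efont_K)$ the base $\efont_n$ is complete so the two topologies agree, and the comparison constant is uniform in $n$: taking a basis of $\OO_{\efont_K}$ over $\OO_\efont$ with its trace-dual basis (available since $\efont_K/\efont$ is separable), the coefficients of $x\in\phi^{-n}(\efont_K)$ are obtained by applying the trace $\phi^{-n}(\efont_K)\to\efont_n$, which is $\phi^{-n}\circ \operatorname{Tr}_{\efont_K/\efont}\circ\phi^n$ and whose valuation defect only shrinks as $n$ grows. With that one sentence supplied, your proof is complete.
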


\begin{proof}
The extensions $\efont_n/\efont$ and $\efont_K/\efont$ are linearly disjoint since the first is purely inseparable and the second is separable. By comparing degrees, we get the first claim. It implies that $ \et \otimes_{\efont} \efont_K \to \et_K$ is surjective, and the second claim follows, since $[\et_K:\et] = [\efont_K:\efont] = [K_\infty:(\Qp)_\infty]$.
\end{proof}

\begin{coro}
\label{ffsh}
We have $\et_K^{\sh} = \cup_{n \geq 0} \phi^{-n}(\efont_K)$.
\end{coro}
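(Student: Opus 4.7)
The plan is to reduce the corollary to the already-established machinery for finite-dimensional $\efont$-vector spaces carrying a continuous semi-linear $\Gamma_k$-action. Since $\Gamma_K$ corresponds under $\chi_{\cyc}$ to an open subgroup of $\Zp^\times$, I would first choose $k \geq 1$ (or $k \geq 2$ if $p=2$) such that $\Gamma_k \subset \Gamma_K$; by Lemma \ref{shsbgr} passing to this open subgroup does not change the space $\et_K^{\sh}$.

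Next I would view $\efont_K$ as a finite-dimensional $\efont$-vector space equipped with the continuous semi-linear action of $\Gamma_k$ inherited from the field-of-norms construction. Proposition \ref{gamodsh} then applies directly and gives $\efont_K = \efont_K^{\sh}$, i.e.\ $\efont_K$ is super-H\"older (alternatively, one could note that $\efont_K$ is a $(\phi,\Gamma)$-module and invoke Proposition \ref{phigsh}, which gives a sharper constant but is not needed here).

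Having established super-H\"olderness of $\efont_K$, the key step is to apply Corollary \ref{vecbashcor} to $M = \efont_K$, yielding
\[ (\et \otimes_{\efont} \efont_K)^{\sh} = \efont_\infty \otimes_{\efont} \efont_K. \]
By Lemma \ref{ffpgm}, the left-hand side is $\et_K^{\sh}$, and the right-hand side equals $\cup_{n \geq 0} (\efont_n \otimes_{\efont} \efont_K) = \cup_{n \geq 0} \phi^{-n}(\efont_K)$, which gives the desired identity.

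The one subtlety I anticipate as the main (minor) obstacle is checking that the natural $X$-adic valuation on $\et_K$ is compatible, up to a bounded constant, with the valuation on $\et \otimes_{\efont} \efont_K$ defined via a good $\efont^+$-basis of $\efont_K$ as in Proposition \ref{mshbas}. This is why Lemma \ref{mshsub} is invoked: since the super-H\"older spaces $M^{\Gamma_k\hyphen\sh,\lambda}$ do not depend on the choice of $\Gamma_k$-stable lattice, one can freely identify the super-H\"older vectors computed intrinsically in $\et_K$ with those computed in $\et \otimes_\efont \efont_K$ via a good basis, and the argument goes through without further incident.
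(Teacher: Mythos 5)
Your proposal is correct and follows essentially the same route as the paper: establish that $\efont_K$ is super-H\"older, apply the decompletion result for $\et\otimes_\efont M$ (Proposition \ref{vecbash} / Corollary \ref{vecbashcor}), and conclude via Lemma \ref{ffpgm}. The paper phrases this through Corollary \ref{decompgm}, i.e.\ the $(\phi,\Gamma)$-module-specific Proposition \ref{phigsh}, while you lead with the more general Proposition \ref{gamodsh}; since you explicitly note the \ref{phigsh} alternative, the two arguments coincide in substance.
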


\begin{proof}
This follows from lemma \ref{ffpgm} and coro \ref{decompgm}, as $\efont_K$ is a $(\phi,\Gamma_K)$-module over $\efont$, and $\cup_{n \geq 0} \phi^{-n}(\efont_K) = \efont_\infty \otimes_\efont \efont_K$.
\end{proof}

\begin{rema}
\label{bcla}
In characteristic zero, $\hat{K}_\infty$ is a $p$-adic Banach representation of $\Gamma_K$, and by theorem 3.2 of \cite{BC16}, $K_\infty$ is the space $\hat{K}_\infty^{\mathrm{la}}$ of locally analytic vectors in $\hat{K}_\infty$.
\end{rema}

\subsection{The $p$-adic local Langlands correspondence}
\label{secllp}

We now prove a result that suggests that the theory of super-H\"older
vectors could have some applications to the $p$-adic local Langlands
correspondence. In order to avoid too many technicalities, we consider
only the simplest example. Recall that if $f \in \efont^+$,  there exist
$f_0,\hdots,f_{p-1} \in \efont^+$ such that $f = \sum_{i=0}^{p-1}
\phi(f_i) (1+X)^i$. We define $\psi(f)=f_0$. The map $\psi : \efont^+ \to
\efont^+$ has the following properties: $\psi(f \phi(h)) = h \psi(f)$ if
$f,h \in \efont^+$ and $\psi \circ g = g \circ \psi$ if $g \in
\Zp^\times$.

Let $M = \varprojlim_\psi \efont^+$ be the set of sequences $m = (m_0,m_1,\hdots)$ with $m_i \in \efont^+$ and $\psi(m_{i+1}) = m_i$ for all $i \geq 0$. The space $M$ is endowed with an action of $\Zp^\times$ given by $(g \cdot m)_i = g \cdot m_i$ and the structure of an $\efont^+$-module given by $(f(X) m)_i = \phi^i(f(X))m_i$. Following Colmez, we could extend these structures to an action of the Borel subgroup $\BB$ of $\GL_2(\Qp)$ on $M$, and this idea is an important step in the construction of the $p$-adic local Langlands correspondence. The representation $M$ is then the dual of most of the restriction to $\BB$ of a parabolic induction. However, we don't use this here. 

Let $\vx$ be the $X$-adic valuation on $M$: $\vx(m)$ is the max of the $n \geq 0$ such that $m \in X^n M$. The space $M$ is separated and complete for the $X$-adic topology, although this is not the natural topology on $M$ (the natural topology is induced by the product topology $\varprojlim_\psi \efont^+ \subset \prod \efont^+$. 
The action of $\Zp^\times$ on $M$ is not continuous for the $X$-adic topology: $M \neq M^{\cont}$ in the notation of remark \ref{mcont}).

We have an injection $i : \efont^+ \to M$, given by $i(f) = (f,\phi(f),\phi^2(f),\hdots)$.

\begin{prop}
\label{llpsh}
We have $M^{\Gamma_k\hyphen\sh,k} = i(\efont^+)$.
\end{prop}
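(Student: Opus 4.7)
The plan is to prove both inclusions in proposition \ref{llpsh} by working componentwise on $M = \varprojlim_\psi \efont^+$, using the observation that $\vx_M(m) \geq n$ if and only if $\vx(m_i) \geq np^i$ for all $i \geq 0$ (this follows directly from $(X \cdot m)_i = \phi^i(X) m_i = X^{p^i} m_i$). For the easy inclusion $i(\efont^+) \subset M^{\Gamma_k\hyphen\sh,k}$, I use that $\phi$ and $\Gamma_k$ commute on $\efont^+$ (since $(1+X)^{pa} = (1+X^p)^a$ in characteristic $p$), so $i$ is $\Gamma_k$-equivariant with $g \cdot i(f) - i(f) = i(g \cdot f - f)$; combined with the evident identity $\vx_M(i(h)) = \vx(h)$ and with proposition \ref{etnsh} applied with $n=0$, which gives $\efont^+ = (\efont^+)^{\Gamma_k\hyphen\sh,k,0}$, this will yield $i(f) \in M^{\Gamma_k\hyphen\sh,k,0}$ for every $f \in \efont^+$.

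For the reverse inclusion I start with $m \in M^{\Gamma_k\hyphen\sh,k,\mu}$ and translate the super-H\"older bound $\vx_M(g \cdot m - m) \geq p^{k+j} + \mu$ (for $g \in 1+p^{k+j}\Zp$) componentwise into $\vx(g \cdot m_i - m_i) \geq p^{k+i+j} + \mu p^i$; this is exactly the assertion that $m_i \in (\efont^+)^{\Gamma_k\hyphen\sh, k+i, \mu p^i}$. The main step, and what I expect to be the core of the argument, is an iterative use of proposition \ref{levelzer} to strip off powers of $\phi$. Since $k + i > k$ whenever $i \geq 1$, proposition \ref{levelzer} forces $m_i \in \kf\dcroc{X^p} = \phi(\efont^+)$; writing $m_i = \phi(m_i^{(1)})$, the commutation of $\phi$ with $\Gamma_k$ together with $\vx(\phi(a)) = p \cdot \vx(a)$ implies that $m_i^{(1)}$ is super-H\"older of level $k + i - 1$. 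Iterating exactly $i$ times, each step consuming one unit of level while keeping it strictly above $k$ until the last step, I expect to obtain $m_i = \phi^i(n_i)$ for some $n_i \in \efont^+$.

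To finish, I apply $\psi$ to the defining compatibility $\psi(m_{i+1}) = m_i$ and use $\psi \circ \phi = \mathrm{id}$ on $\efont^+$ to rewrite it as $\phi^i(n_{i+1}) = \phi^i(n_i)$; injectivity of $\phi$ then gives $n_{i+1} = n_i$ for every $i \geq 0$, so all $n_i$ equal $n_0 = m_0$ and $m = i(m_0) \in i(\efont^+)$. The hardest part is tracking super-H\"older parameters across the iterated extraction of $\phi^{-1}$, but this works cleanly precisely because $\phi$ multiplies $\vx$ by $p$: each use of proposition \ref{levelzer} consumes exactly one unit of super-H\"older level, so that the number of iterations available matches the index $i$ on the nose.
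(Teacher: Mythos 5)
Your proof is correct and takes essentially the same route as the paper's: translate the super-H\"older bound on $M$ componentwise via $\vx_M(m)\geq n \Leftrightarrow \vx(m_i)\geq np^i$, deduce $m_i\in(\efont^+)^{\Gamma_k\hyphen\sh,k+i}$, strip off $\phi$ by iterating Proposition \ref{levelzer}, and conclude with $\psi$-compatibility. The paper's published proof simply compresses the iterated application of Proposition \ref{levelzer} into the phrase ``the results of \S\ref{secgam} imply that $m_j\in\phi^j(\efont^+)$'' and omits the (easy) verification of the inclusion $i(\efont^+)\subset M^{\Gamma_k\hyphen\sh,k}$, both of which you spell out explicitly.
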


\begin{proof}
Recall that if $m \in M$ and $f(X) \in \efont$, then $(f(X) m)_j = \phi^j(f(X))m_j$ for all $j \geq 0$. We have $\vx(\phi^j(f(X))) = p^j \vx(f(X))$. In particular, if $m \in M^{\Gamma_k\hyphen\sh,k}$, then $m_j \in (\efont^+) ^{\Gamma_k\hyphen\sh,k+j}$. The results of \S \ref{secgam} imply that $m_j \in \phi^j(\efont^+)$. If $m_j = \phi^j(f_j)$, the $\psi$-compatibility implies that $f_j=f_0$ for all $j \geq 0$. This implies the claim.
\end{proof}

A generalization of prop \ref{llpsh} to representations of $\BB$ obtained from $(\phi,\Gamma)$-modules using Colmez' construction shows that using the theory of super-H\"older vectors, we can recover the $(\phi,\Gamma)$-module giving rise to such a representation of $\BB$. One of the main results of \cite{BV} is that every infinite dimensional smooth irreducible $\kf$-linear representation of $\BB$ having a central character comes from a $(\phi,\Gamma)$-module by Colmez' construction. Is it possible to reprove this result using super-H\"older vectors?

\providecommand{\bysame}{\leavevmode ---\ }
\providecommand{\og}{``}
\providecommand{\fg}{''}
\providecommand{\smfandname}{\&}
\providecommand{\smfedsname}{\'eds.}
\providecommand{\smfedname}{\'ed.}
\providecommand{\smfmastersthesisname}{M\'emoire}
\providecommand{\smfphdthesisname}{Th\`ese}

\end{document}